\newtheorem{theorem}{Theorem}[section]
\newtheorem{definition}[theorem]{Definition}
\newtheorem{cor}[theorem]{Corollary}
\newtheorem{lem}[theorem]{Lemma}
\newtheorem{pro}[theorem]{Proposition}
\numberwithin{equation}{section}
	\newcommand{\abs}[1]{\left|#1\right|}
\begin{document}

\title{\vspace{-1.2cm} \bf Bergman-Calabi diastasis and K\"ahler metric of constant holomorphic sectional curvature \rm}
\author{Robert Xin Dong \quad  and  \quad  Bun Wong}

\date{\it \small Dedicated to Professor Joseph J. Kohn}
\maketitle

\begin{abstract}
We prove that for a bounded domain in $\mathbb C^n$ with the Bergman metric of constant holomorphic sectional curvature being biholomorphic to a ball is equivalent to the hyperconvexity or the exhaustiveness of the Bergman-Calabi diastasis. By finding its connection with the Bergman representative coordinate, we give explicit formulas of the Bergman-Calabi diastasis and show that it has bounded gradient. In particular, we prove that any bounded domain whose Bergman metric has constant holomorphic sectional curvature is Lu Qi-Keng. We also extend a theorem of Lu towards the incomplete situation and characterize pseudoconvex domains that are biholomorphic to a ball possibly less a relatively closed pluripolar set.

\end{abstract}

\renewcommand{\thefootnote}{\fnsymbol{footnote}}
\footnotetext{\hspace*{-7mm} 
\begin{tabular}{@{}r@{}p{16.5cm}@{}}
& Keywords.  Bergman metric, Bergman representative coordinate, holomorphic sectional curvature, hyperconvex domain, Lu Qi-Keng domain, $L^2$-domain of holomorphy, pluripolar set\\
& Mathematics Subject Classification. Primary 32F45; Secondary 32T05, 32Q05, 32D20\\

\end{tabular}}

\section{Introduction}

In \cite{Lu}, Lu proved his well-known uniformization theorem: a bounded domain in $\mathbb C^n$ with a complete Bergman metric of constant holomorphic sectional curvature is biholomorphic to the Euclidean ball.  
For  a bounded domain  $\Omega \subset \mathbb C^n$, 
fix a point $z_0\in \Omega$ and let $A_{z_0}:=\{z\in \Omega \, | \,K(z, z_0) =0 \, \}$ be the zero set  of the Bergman kernel $K(\cdot , z_0)$. Since $A_{z_0}$ is an analytic variety, as domains $\Omega \setminus A_{z_0}$ and $\Omega$ have the same Bergman kernel $K$ and Bergman metric $g$.
Consider on $\Omega \setminus A_{z_0}$ the K\"{a}hler potential
\begin{equation} \label{dia}
\Phi_{z_0}(z):= \log \frac{K(z, z)  K(z_0, z_0)}{ |K(z, z_0)|^2} 
\end{equation}
for the Bergman metric $g=\partial \overline  \partial   \Phi_{z_0}$. 
  Locally, the right hand side of \eqref{dia} coincides with Calabi's diastasis \cite{Ca}.  We call the function $\Phi_{z_0}(z)$ the Bergman-Calabi diastasis relative to $z_0$.
 In this paper, we shall prove the following main theorem.

\begin{theorem} \label{biholo} Let $\Omega \subset \mathbb C^n$ be a bounded domain whose Bergman metric has its holomorphic sectional curvature  identically equal to a negative constant $-c^2$. Then the followings are equivalent:
\begin{enumerate}
\item [(i)] $\Omega$ is Bergman complete; 

\item [(ii)] for some  $z_0\in \Omega$, $\Phi_{z_0}(z)$ blows up to infinity at the boundary of $\Omega$;    

\item [(iii)] $\Omega$ is hyperconvex;

\item [(iv)] $\Omega$ is biholomorphic to the Euclidean ball $\mathbb B^n$ and $n = 2/c^2-1$.
   
\end{enumerate}

\end{theorem}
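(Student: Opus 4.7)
My plan is to treat (iv) as a hub and establish each of (i), (ii), (iii) equivalent to it. Several of the required arrows are already available under the constant holomorphic sectional curvature hypothesis: (i)$\Rightarrow$(iv) is Lu's uniformization theorem \cite{Lu}; (iv)$\Rightarrow$(i) is the classical completeness of the Bergman metric on $\mathbb{B}^n$ combined with its biholomorphic invariance; (iii)$\Rightarrow$(i) is the B\l{}ocki--Pflug / Herbort theorem that every bounded hyperconvex domain is Bergman complete; and (iv)$\Rightarrow$(iii) follows by pulling back the bounded negative plurisubharmonic exhaustion $|w|^2-1$ of $\mathbb{B}^n$ along a biholomorphism $f\colon\Omega\to\mathbb{B}^n$, which, being proper as a homeomorphism of bounded domains, preserves the exhaustion property.

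The genuinely new content is the equivalence (ii)$\Leftrightarrow$(iv). For (iv)$\Rightarrow$(ii) I would exploit that \eqref{dia} is biholomorphically invariant in the sharp sense $\Phi^\Omega_{z_0}=\Phi^{\mathbb{B}^n}_{f(z_0)}\circ f$: substituting the transformation rule $K_\Omega(z,w)=K_{\mathbb{B}^n}(f(z),f(w))\det f'(z)\overline{\det f'(w)}$ into \eqref{dia} cancels the Jacobian factors. A direct calculation with $K_{\mathbb{B}^n}(z,w)=c_n(1-z\cdot\bar w)^{-n-1}$ then gives $\Phi^{\mathbb{B}^n}_0(z)=-(n+1)\log(1-|z|^2)$, which blows up on $\partial\mathbb{B}^n$; properness of $f$ transports this blow-up to $\partial\Omega$, while the dimensional identity $n+1=2/c^2$ emerges from the standard curvature computation on $\mathbb{B}^n$.

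The hard direction is (ii)$\Rightarrow$(iv), which is where the Bergman representative coordinate $\sigma$ centered at $z_0$ announced in the abstract must enter. The key step is the closed-form identity
\[
\Phi_{z_0}(z) \;=\; -(n+1)\log\bigl(1-|\sigma(z)|^2\bigr),
\]
which I would derive from the constant curvature assumption by noting that both sides are K\"ahler potentials for the same metric of constant holomorphic sectional curvature $-c^2$, agree together with their $2$-jets at $z_0$, and therefore must coincide globally by Calabi's rigidity of the diastasis. Granted this identity, reality of $\Phi_{z_0}$ confines $\sigma$ to $\mathbb{B}^n$; hypothesis (ii) makes $\sigma\colon\Omega\setminus A_{z_0}\to\mathbb{B}^n$ proper; positivity of the Bergman metric $g=\partial\bar\partial\Phi_{z_0}=\sigma^*g_{\mathbb{B}^n}$ forces $d\sigma$ to have full rank everywhere; and a proper local biholomorphism onto the simply connected ball is a covering, hence a biholomorphism $\Omega\setminus A_{z_0}\cong\mathbb{B}^n$. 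To upgrade this to $\Omega\cong\mathbb{B}^n$ I would establish the Lu Qi-Keng property $A_{z_0}=\emptyset$: the transformation law yields $K_\Omega(z,z_0)=\mathrm{const}\cdot J_\sigma(z)\overline{J_\sigma(z_0)}$ on $\Omega\setminus A_{z_0}$ from the constancy of $K_{\mathbb{B}^n}(\cdot,0)$, and extending $\sigma$ across the codimension $\geq 1$ analytic set $A_{z_0}$ by a Riemann removable singularities argument would force $K_\Omega(\cdot,z_0)$ to be nowhere zero on $\Omega$. The main obstacle I foresee is precisely this last pair of steps --- globalizing Calabi's local diastasis rigidity to all of $\Omega\setminus A_{z_0}$ under only the curvature hypothesis, and extending the representative coordinate across $A_{z_0}$ --- both of which require careful analytic continuation and attention to the topology of $\Omega\setminus A_{z_0}$.
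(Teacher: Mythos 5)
Your architecture is sound, and for the decisive implication (ii)$\Rightarrow$(iv) it is essentially the paper's: closed-form diastasis in the Bergman representative coordinate, properness from (ii), nonvanishing Jacobian, and an unbranched covering of the simply connected ball. The easy arrows you route through (iv) as a hub are all fine, and bypassing hyperconvexity by pulling back $|w|^2-1$ is a legitimate shortcut (the paper instead closes the cycle (ii)$\Rightarrow$(iii)$\Rightarrow$(i)$\Rightarrow$(iv)$\Rightarrow$(ii), proving (ii)$\Rightarrow$(iii) with the explicit exhaustion $-(4^{-1}c^2\Phi_{z_0}+1)^{-1}$, which relies on the bounded-gradient estimate of Theorem \ref{zero}). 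However, the step you yourself flag as the ``main obstacle'' is a genuine gap, not a technicality, and it is exactly where the paper does its real work. The identity $\Phi_{z_0}=-\tfrac{2}{c^2}\log\bigl(1-\tfrac{c^2}{2}|\sigma|^2\bigr)$ cannot be obtained ``globally by Calabi's rigidity'' from agreement of $2$-jets: rigidity of the diastasis is local, and the right-hand side is not even defined at points where $|\sigma|^2\geq 2c^{-2}$, so before asserting global coincidence you must first prove that $\sigma$ maps all of $\Omega\setminus A_{z_0}$ into the ball. Saying ``reality of $\Phi_{z_0}$ confines $\sigma$ to $\mathbb{B}^n$'' is circular, since it invokes the identity at points where you do not yet have it. The paper fills this in Lemma \ref{key!}: (a) Lu's local normal form for $K(z,z)$ in representative coordinates on a neighborhood $U_p$, obtained by matching the entire Taylor series of $\log K$ against the model potential via the constant-curvature PDE (not just the $2$-jet), then polarized; (b) real-analytic continuation of the identity to the open set $\Omega'=\{z:\sigma(z)\in\mathbb{B}^n\}$; (c) a path-connectedness contradiction --- a first exit point of $\sigma\circ\gamma$ through $\partial\mathbb{B}^n$ would force the right side to blow up while $\Phi_p$ stays finite --- giving $\Omega'=\Omega\setminus A_p$. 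Some version of (c) is indispensable in your write-up.

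A second, smaller gap is the Lu Qi-Keng step. From $K_\Omega(z,z_0)=c_nJ_\sigma(z)\overline{J_\sigma(z_0)}$ and the extension $\tilde\sigma$ across $A_{z_0}$ you only conclude $J_{\tilde\sigma}=0$ on $A_{z_0}$; to reach a contradiction you still need, say, $K_\Omega(q,q)=K_{\mathbb{B}^n}(\tilde\sigma(q),\tilde\sigma(q))|J_{\tilde\sigma}(q)|^2=0$ against $K_\Omega(q,q)>0$, which requires knowing $\tilde\sigma(q)$ lies strictly inside the ball (maximum principle). The paper's argument is more direct and avoids the covering map entirely: for $z_j\to q\in A_p$ the left side of the diastasis identity tends to $+\infty$ (as $K(z_j,p)\to 0$) while the right side stays bounded, so $A_p=\emptyset$. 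Finally, a cosmetic circularity: during (ii)$\Rightarrow$(iv) the exponent must be written as $2/c^2$ and the ball has radius $\sqrt{2}/|c|$; writing $n+1$ presupposes the conclusion $n=2/c^2-1$.
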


 \medskip

 \noindent{}{\bf Remarks.}

\begin{enumerate}[label=(\roman*)]
 
\item [(a)] The equivalence between (i) and (iv) is exactly Lu's theorem \cite{Lu}. A domain is said to be Bergman complete if it is a complete metric space with respect to the distance induced by the Bergman metric.

\item [(b)] 

It follows from the transformation rule of the Bergman kernel that the Bergman-Calabi diastasis is invariant under biholomorphic mappings in the sense that if $f$ is a biholomorphic map from $\Omega_1$ to $\Omega_2$, then
$$
\Phi_{\Omega_1; z_0}(z)={\Phi}_{\Omega_2; f(z_0)}(f(z)).
$$
Thus, Condition (ii) is preserved under biholomorphic mappings. For general bounded domains, as demonstrated by our examples in Proposition \ref{annulus}, Condition (ii) does not imply the Bergman completeness.

\item [(c)]

Under the constant negative holomorphic sectional curvature assumption in Theorem \ref{biholo}, Condition (ii) in fact implies  that $\Omega$ is Bergman exhaustive, namely its Bergman kernel function $K(z, z)$ blowing up to infinity at any boundary point (see Proposition \ref{Mok-Yau}). When $n=1$, Chen in \cite{C00} proved that if a bounded domain in $\mathbb C$ is Bergman exhaustive then it is Bergman complete;  moreover, the converse is not true as shown by Zwonek \cite{Zw}. When $n\geq 2$, the Bergman exhaustiveness and completeness do not imply each other in general (see \cite[Chap. 15]{JP}).  
For example, the Hartogs triangle $\mathcal H:=\{z \in \mathbb C^2 : |z_1|<|z_2|<1\}$ is Bergman exhaustive. 
But from (b) we know that $\mathcal H$ does not satisfy Condition (ii) and is not Bergman complete, as it
is biholomorphic to $\mathbb D \times \mathbb D^* $.
 
 The motivation of Condition (ii) in Theorem \ref{biholo} comes from the work \cite{CW} of Cheung and the second author who proved that if a bounded convex domain in $\mathbb C^n$ with a Hermitian metric of constant negative holomorphic sectional curvature such that all metric components blow up to infinity at the boundary, then the domain is biholomorphic to a  ball (see also \cite{CW2, W}). 

\item [(d)] A bounded domain is said to be hyperconvex if there exists a continuous negative plurisubharmonic exhaustion function. Ohsawa in \cite{O93} proved that a bounded hyperconvex domain is Bergman exhaustive. B\l{}ocki and Pflug in \cite{BP98} and Herbort in \cite{H99} independently proved that a bounded hyperconvex domain is Bergman complete. On the other hand, by the works of Diederich and Forn\ae ss \cite{DF77}, Kerzman and Rosay \cite{KR81} and Demailly \cite{De87}  it is known that any pseudoconvex domain with $C^1$-smooth boundary is hyperconvex. Previously, it was shown that any pseudoconvex domain with $C^1$-smooth boundary is both Bergman exhaustive (see \cite{P75}) and Bergman complete (see \cite{O81}). In this paper, instead of imposing boundary regularity conditions, we place curvature conditions on the domain.

\item [(e)] 
 Theorem \ref{biholo} says that under the constant negative holomorphic sectional curvature assumption, the Bergman completeness, exhaustiveness of the Bergman-Calabi diastasis, and  hyperconvexity are all equivalent to one another, and either of them is equivalent to the domain being biholomorphic to a ball. Our proof of Theorem \ref{biholo}
is carried out as 
\begin{equation} \label{circle}
 (ii)  \Longrightarrow (iii)  \Longrightarrow (i)  \Longrightarrow  (iv)   \Longrightarrow (ii).
\end{equation}

The second and third implications follow from \cite{BP98, H99} and \cite{Lu}, respectively. We prove the first and last implications of \eqref{circle} in Section 3, where we additionally give a direct proof of the fact that $(ii) \Longrightarrow (iv)$. Therefore, the equivalence between $(ii) $ and $ (iv)$ can be proved straightforwardly without using Lu's theorem.

\end{enumerate}

 Unlike the complete K\"{a}hler-Einstein metric which is known to exist on any bounded pseudoconvex domain as shown by Cheng and Yau \cite{CY} and Mok and Yau \cite{MY83}, the Bergman metric is incomplete in many cases. To find more applications in K\"{a}hler geometry, our second motivation is to give an extension of Lu's theorem to a wider class of domains without the completeness assumption. 
Let $\Omega \subset \mathbb C^n$ be a bounded domain with the Bergman metric $g$. Recall that $\Omega$ is called a Lu Qi-Keng domain if for any  $p \in \Omega$, its Bergman kernel $K(\cdot , p)$  has no zero set. At $p\in  \Omega$, the Bergman representative coordinate $T (z) =(w_1, ... , w_n)^{\tau}$ is defined as
\begin{equation} \label{rep}
w_{\alpha} (z):=\sum _{j=1}^{n} g^{\bar j \alpha }(p) \left(K(z, p) ^{-1} \left. \frac{\partial}{\partial \overline {t_j} } \right|_{t=p} K(z, t)  -    \left. \frac{\partial}{\partial \overline {t_j} } \right|_{t=p}  \log K(t, t)\right),
\end{equation}
where $(g^{\bar j \alpha })  = (g_{\alpha \bar{j}})^{-1}$. It is well known that $T (z)$ is holomorphic on $\Omega $ less the zero set of $K(\cdot , p)$.

\medskip

  Our second theorem shows that a bounded domain is Lu Qi-Keng if its Bergman metric has constant holomorphic sectional curvature, and in this case the Bergman representative coordinate $T$ maps $\Omega$ to a ball. Moreover, we give explicit formulas for the Bergman-Calabi diastasis, whose gradient is proved to be bounded.

 \begin{theorem} \label{zero} Let $\Omega \subset \mathbb C^n$ be a bounded domain whose Bergman metric $g$ has its holomorphic sectional curvature  identically equal to a negative constant $-c^2$. Then for any $p \in \Omega$ it holds that
 \begin{enumerate}

\item [1)] the Bergman kernel $K(\cdot , p)$ has no zero set;
 
\item [2)] $T$ defined by \eqref{rep} maps $\Omega$ to
$ \{(w_1, ... , w_n)^{\tau} :   \sum_{\alpha, \beta=1}^n  w_\alpha   g_{ \alpha \bar \beta } (p) \overline {w_\beta }   < {2}{c^{-2}} \}$;

\item [3)] the Bergman-Calabi diastasis relative to $p$ can be written as
\begin{equation}  \label{Lu_g}
\Phi_{p}( z) =  {\frac{-2}{c^2}} \log      \left (1 - \frac{c^2}{2} \sum_{\alpha, \beta=1}^n  w_\alpha(z)   g_{ \alpha \bar \beta } (p) \overline {w_\beta(z)} \right ), \quad z \in  \Omega;
\end{equation}  

\item [4)] for any $z_0 \in \Omega$,
the length of $\partial \Phi_{z_0}$  measured by $g$  
is less than $\sqrt{2} |c|^{-1}$, namely, 
$$
 |\partial    \Phi_{z_0}|^2_g (p) < {2}{c^{-2}}.
$$

 \end{enumerate}

\end{theorem}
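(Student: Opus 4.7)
I would establish (3) first, and then derive (1), (2), and (4) from it; the main conceptual step will be local, namely turning the pointwise curvature hypothesis into the explicit Poincar\'e form of $\Phi_p$ in the Bergman representative coordinates. First I would note that on a small neighborhood of $p$ the map $T$ of (1.2) is a biholomorphism onto a neighborhood of $0\in\mathbb C^n$; direct differentiation in (1.2) gives $T(p)=0$ and $\partial w_\alpha/\partial z_\gamma|_{z=p}=\delta^\alpha_\gamma$, which in the coordinates $w=T(z)$ forces $\tilde g_{\alpha\bar\beta}(0)=g_{\alpha\bar\beta}(p)$. Now $\Phi_p$ is a real-analytic K\"ahler potential for $g$, and $g$ has constant holomorphic sectional curvature $-c^2$. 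Restricting $g$ to any complex disk through $p$ yields a metric of constant Gaussian curvature $-c^2$ in 1D, whose K\"ahler potential is uniquely determined up to a pluriharmonic term and has the Poincar\'e form; sweeping over all complex directions (equivalently, solving the constant curvature equation $R_{\alpha\bar\beta\gamma\bar\delta}=-\tfrac{c^2}{2}(g_{\alpha\bar\beta}g_{\gamma\bar\delta}+g_{\alpha\bar\delta}g_{\gamma\bar\beta})$ order by order with data $\Phi_p(p)=0$, $d\Phi_p(p)=0$) would force, in a neighborhood of $p$,
$$\Phi_p(z)=-\frac{2}{c^2}\log\!\left(1-\frac{c^2}{2}\sum_{\alpha,\beta=1}^n g_{\alpha\bar\beta}(p)\,w_\alpha(z)\overline{w_\beta(z)}\right).$$
Since both $e^{-c^2\Phi_p/2}$ and $1-\tfrac{c^2}{2}H(z)$, with $H(z):=\sum_{\alpha,\beta}g_{\alpha\bar\beta}(p)w_\alpha(z)\overline{w_\beta(z)}$, are real-analytic on the connected set $\Omega\setminus A_p$, the identity will propagate globally by analytic continuation, giving (3).

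\emph{Vanishing of $A_p$.} Next, since $e^{-c^2\Phi_p/2}>0$, formula (3) forces $H<2/c^2$ on $\Omega\setminus A_p$, so the components of $T$ are bounded holomorphic functions there. The set $A_p$ is an analytic subvariety of codimension at least one, so Riemann's removable singularities theorem extends each $w_\alpha$ holomorphically to $\Omega$, and $H$ becomes a continuous plurisubharmonic function on $\Omega$. Suppose, for contradiction, that some $q\in A_p$ exists. On $\Omega\setminus A_p$ one has $1-\tfrac{c^2}{2}H=e^{-c^2\Phi_p/2}$; the left side extends continuously to $\Omega$, while the right side tends to $0$ as $z\to q$ because $\Phi_p\to+\infty$ there. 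Thus $H(q)=2/c^2$, so $H$ attains its supremum at an interior point of the connected domain $\Omega$, and the strong maximum principle for plurisubharmonic functions forces $H\equiv 2/c^2$. This contradicts $H(p)=0$, which is immediate from $w_\alpha(p)=0$ in (1.2). Hence $A_p=\emptyset$; this proves (1), and (3) then holds globally on $\Omega$, yielding (2).

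\emph{Gradient bound.} For (4), I would fix $z_0\in\Omega$ and apply (3) with base point $z_0$. In the coordinates $w=T_{z_0}(z)$, writing $h:=g(z_0)$, $v_\alpha:=\sum_\beta h_{\alpha\bar\beta}\overline{w_\beta}$, and $\psi:=1-\tfrac{c^2}{2}\sum h_{\alpha\bar\beta}w_\alpha\overline{w_\beta}$, the metric becomes the rank-one perturbation $g_{\alpha\bar\beta}=\psi^{-1}h_{\alpha\bar\beta}+\tfrac{c^2}{2}\psi^{-2}v_\alpha\overline{v_\beta}$. Inverting by Sherman--Morrison (using $h^{-1}v=\bar w$ and $v^{*}h^{-1}v=\tfrac{2}{c^2}(1-\psi)$) and substituting $(\partial\Phi_{z_0})_\alpha=v_\alpha/\psi$ yields
$$|\partial\Phi_{z_0}|^2_g(p)=\sum_{\alpha,\beta=1}^n g_{\alpha\bar\beta}(z_0)\,w_\alpha^{z_0}(p)\,\overline{w_\beta^{z_0}(p)},$$
which is strictly less than $2/c^2$ by (2) applied with base point $z_0$, establishing (4).

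\textbf{Main obstacle.} The hard part will be the first step: turning the pointwise curvature hypothesis into the explicit Poincar\'e form of $\Phi_p$ in the Bergman representative coordinates. Once the representative coordinate is identified as the Calabi/normal coordinate in which a constant negative holomorphic sectional curvature potential takes its Poincar\'e form, the remaining three assertions fall out cleanly from Riemann extension, the strong maximum principle for plurisubharmonic functions, and a Sherman--Morrison inversion of the metric.
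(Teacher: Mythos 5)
Your overall architecture matches the paper's: establish the Poincar\'e normal form of $\Phi_p$ in the Bergman representative coordinates locally via the constant-curvature equation, propagate it, deduce $A_p=\emptyset$ and the range of $T$, and read off the gradient bound. Several of your steps are in fact cleaner than the paper's. Comparing $e^{-c^2\Phi_p/2}$ with $1-\tfrac{c^2}{2}H$ (both real-analytic on all of $\Omega\setminus A_p$) and continuing analytically gives $H<2/c^2$ on $\Omega\setminus A_p$ for free, bypassing the paper's path-crossing contradiction argument used to show that no point escapes the ball. Your proof of 1) via the maximum principle for the plurisubharmonic function $H$ is a legitimate variant of the paper's direct contradiction (both ultimately rest on Riemann extension plus a maximum principle). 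For 4), the paper instead proves a separate identity (its Lemma 2.1) expressing $|\partial\Phi_{z_0}|^2_g(p)$ as $\sum_{\alpha,\beta}w_\alpha(z_0)g_{\alpha\bar\beta}(p)\overline{w_\beta(z_0)}$ in the representative coordinate \emph{at} $p$ evaluated at $z_0$; your Sherman--Morrison computation in the coordinate at $z_0$ evaluated at $p$ yields the transposed formula, which I have checked is also correct and gives the same bound. These differences are genuine but all sound.

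The one genuine gap is in the local step, which you correctly flag as the crux but do not actually close. First, restricting $g$ to a complex disk through $p$ does \emph{not} produce a metric of constant Gaussian curvature $-c^2$ unless the disk is totally geodesic, so that heuristic fails. Second, and more importantly, the order-by-order integration of $R_{i\bar j k\bar l}=-\tfrac{c^2}{2}(g_{i\bar j}g_{k\bar l}+g_{i\bar l}g_{k\bar j})$ with only the data $\Phi_p(p)=0$, $d\Phi_p(p)=0$ does not determine the potential: the curvature identity expresses the fourth-order mixed derivatives of $\log K$ in terms of the third-order ones of type $(2,1)$, and those third-order coefficients are free data not controlled by $\Phi_p(p)=0$ or $d\Phi_p(p)=0$ (the diastasis normalization only kills purely holomorphic and anti-holomorphic Taylor terms, not terms like $w_\gamma w_\alpha\overline{w_\beta}$). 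The missing ingredient is precisely the normal-coordinate property of the Bergman representative coordinates, namely $\partial^3\log K/\partial w_\gamma\partial w_\alpha\partial\overline{w_\beta}\big|_{w=0}=0$, which the paper imports from Bochner and from Lu's Lemma 2. Only with this vanishing do all higher Taylor coefficients of $\log K$ at $w=0$ agree with those of the model potential $-\tfrac{2}{c^2}\log\bigl(1-\tfrac{c^2}{2}\sum w_\alpha g_{\alpha\bar\beta}(p)\overline{w_\beta}\bigr)$ up to a pluriharmonic term, which then cancels in $\Phi_p$. You would need to state and use this lemma (or reprove it) to make the first step rigorous; everything downstream of it in your proposal is fine.
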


Previously, Lu's theorem in \cite{Lu} yields the above  conclusions 1) -- 4) under the additional assumption that $\Omega$ is Bergman complete. Theorem \ref{zero} also says that the map $T$ is holomorphic on $\Omega$.
However, if the completeness assumption in Lu's theorem
is dropped completely, then one cannot expect the same conclusion as what he proved, namely the domain being necessarily biholomorphic to the ball. In fact, if $E$ is a non-empty relatively closed pluripolar subset of $\mathbb B^n$, then any domain in $\mathbb C^n$ that is biholomorphic to $\mathbb B^n  \setminus E$ admits an incomplete Bergman metric of constant holomorphic sectional curvature. We say a set $E$ is pluripolar if there exists a plurisubharmonic function $\varphi$ in $\mathbb C^n$ such that $\varphi = -\infty$ on $E$, and a result \cite{S82} of Siciak  implies that the  Bergman spaces on $\mathbb B^n  \setminus E$ and on $\mathbb B^n$ are the same.

\medskip

Our third theorem extends Lu's theorem towards the Bergman-incomplete situation. For simplicity, a domain $\Omega$ is said to satisfy Condition $(\star)$ if there exists some point $p \in \Omega$ such that 

1. $|K(z, p)|$ is bounded from above by a finite constant $\mathcal C>0$ for any $z \in \Omega$;

2. the Bergman representative coordinate $T $ defined at $p$ is continuous up to $ \overline\Omega$.

\begin{theorem} \label{2nd} Let $\Omega \subset \mathbb C^n$ be a bounded pseudoconvex  domain whose Bergman metric has its holomorphic sectional curvature identically equal to a negative constant $-c^2$.  
If  $\Omega$ satisfies Condition $(\star)$, then $\Omega$ is biholomorphic to the Euclidean ball $\mathbb B^n$ possibly less a relatively closed pluripolar set $E$ and $n = 2/c^2-1$.
 
\end{theorem}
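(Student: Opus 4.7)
The plan is to show that the Bergman representative coordinate $T$ supplied by Theorem \ref{zero}, after linearly normalizing $B_p$ to $\mathbb{B}^n$, is a biholomorphism from $\Omega$ onto $\mathbb{B}^n$ less a relatively closed pluripolar set. By Theorem \ref{zero}, $T\colon\Omega\to B_p$ is holomorphic with $T(p)=0$, and combining \eqref{Lu_g} with \eqref{dia} yields
\[
|K(z,p)|^{2}=K(z,z)\,K(p,p)\,\Bigl(1-\tfrac{c^{2}}{2}\,|T(z)|_p^{2}\Bigr)^{2/c^{2}},
\]
where $|T(z)|_p^{2}=\sum_\alpha\sum_\beta T_\alpha(z)\,g_{\alpha\bar\beta}(p)\,\overline{T_\beta(z)}$. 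Both sides are real-analytic on $\Omega$ and each factors uniquely into a piece holomorphic in $z$ and a piece antiholomorphic in $\bar z$; polarizing in $\bar z$ gives
\[
K(z,w)=\frac{K(z,p)\,K(p,w)}{K(p,p)\,\bigl(1-\tfrac{c^{2}}{2}\,\langle T(z),T(w)\rangle_p\bigr)^{2/c^{2}}},\qquad z,w\in\Omega.
\]
The identity $g=T^{\ast}g_{B_p}$ read from \eqref{Lu_g} makes $T$ a local biholomorphism. Since $B_p$ is a linear image of $\mathbb{B}^n$, its Bergman metric has constant holomorphic sectional curvature $-2/(n+1)$, so matching with $-c^{2}$ forces $n=2/c^{2}-1$. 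Condition $(\star)$ further extends $T$ continuously to $\overline\Omega\to\overline{B_p}$.

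Next we prove injectivity. Suppose $T(z_1)=T(z_2)$. The polarized formula gives $K(\,\cdot\,,z_1)=\rho\,K(\,\cdot\,,z_2)$ on $\Omega$ with constant $\rho=K(p,z_1)/K(p,z_2)$, well defined by the Lu Qi-Keng conclusion of Theorem \ref{zero}. Conjugate symmetry converts this into $K(z_1,w)=\bar\rho\,K(z_2,w)$, and the reproducing property tested against $\mathbf{1}\in A^{2}(\Omega)$ (valid since $\Omega$ is bounded) forces $\bar\rho=1$; hence $f(z_1)=f(z_2)$ for every $f\in A^{2}(\Omega)$. Since polynomials lie in $A^{2}(\Omega)$ and separate points, $z_1=z_2$. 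Thus $T\colon\Omega\to V:=T(\Omega)$ is a biholomorphism onto an open pseudoconvex $V\subset B_p$, which after the linear normalization we view as $V\subset\mathbb{B}^n$.

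For the pluripolarity of $\mathbb{B}^n\setminus V$, the identities $T^{\ast}g_V=g=T^{\ast}g_{\mathbb{B}^n}$ give $g_V=g_{\mathbb{B}^n}\big|_V$, so $\log(K_V/K_{\mathbb{B}^n})$ is pluriharmonic on $V$ and, since $V\subset\mathbb{B}^n$ forces $K_V\ge K_{\mathbb{B}^n}$, nonnegative. Pushing the polarized formula through $T$ recasts the Bergman kernel of $V$ as
\[
K_V(v_1,v_2)=\frac{K_V(v_1,0)\,K_V(0,v_2)}{K_V(0,0)\,(1-v_1\cdot\bar v_2)^{n+1}},
\]
so $K_V=K_{\mathbb{B}^n}\big|_V$ if and only if the nowhere-vanishing holomorphic function $b(v):=K_V(v,0)$ is constant. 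Using $b\circ T=K(\cdot,p)/\det dT$ together with the $L^\infty$ bound on $K(\cdot,p)$ from Condition $(\star)$ and the continuous extension of $T$ yields an upper bound on $|b|$ along $\partial V\cap\mathbb{B}^n$; the inequality $K_V\ge K_{\mathbb{B}^n}$ gives the matching lower bound $|b|^{2}\ge b(0)\,n!/\pi^{n}$. The maximum principle applied to the bounded pluriharmonic function $\log|b|$ on the pseudoconvex $V$ pins $b$ down to a constant, so $K_V=K_{\mathbb{B}^n}\big|_V$. By Siciak's theorem \cite{S82} this equality is equivalent to $E:=\mathbb{B}^n\setminus V$ being relatively closed pluripolar, completing the proof.

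The main obstacle is the rigidity step forcing $b\equiv\text{const}$. Condition $(\star)$ bounds $K(\cdot,p)$, not $\det dT$, so transferring it into a boundary bound on $|b|$ along the portion of $\partial V$ strictly inside $\mathbb{B}^n$ requires delicate control of $|\det dT|$ along sequences whose images approach that interior boundary. The pseudoconvexity of $V$ and the sharp lower bound on $|b|$ are what then combine with this estimate, via the maximum principle for pluriharmonic functions, to pin $b$ to its extremal value.
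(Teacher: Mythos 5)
Your route is genuinely different from the paper's (the paper fills in the pluripolar boundary pieces using Pflug--Zwonek and Siciak to produce an enlarged domain $\tilde\Omega$ with the same Bergman kernel, shows via Condition $(\star)$ that $\Phi_p$ blows up on all of $\partial\tilde\Omega$, and then applies Theorem \ref{biholo} to $\tilde\Omega$), and parts of it are sound: the polarized kernel formula is valid on $\Omega\times\Omega$ by Cauchy--Schwarz and the identity theorem, and your injectivity argument via the reproducing property tested against $\mathbf 1$ is correct and rather elegant. But the proof has genuine gaps. First, the curvature-matching step is circular: from \eqref{Lu_g} you only get $g=T^{*}h$ where $h$ is \emph{the} complete metric of curvature $-c^{2}$ on $B_p$; this $h$ coincides with the Bergman metric of the ball only when $c^{2}=2/(n+1)$, which is precisely what you are trying to establish. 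Hence neither the early deduction $n=2/c^{2}-1$ nor the later identity $g_V=g_{\mathbb B^n}|_V$ is available when you invoke it. In the paper this identity comes out only at the end, after the biholomorphism with the ball (less a pluripolar set) is already in hand.

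Second, the rigidity step forcing $b\equiv\mathrm{const}$ is not a proof: as you concede, Condition $(\star)$ bounds $K(\cdot,p)$ but gives no lower bound on $|\det dT|$ near the boundary, so no upper bound on $|b|$ along $\partial V\cap\mathbb B^n$ is actually obtained; and even granting such a bound, ``bounded above on part of $\partial V$ and bounded below everywhere'' does not let the maximum principle pin a pluriharmonic $\log|b|$ to a constant --- you would need to control $\log|b|$ on all of $\partial V$, including $\partial V\cap\partial\mathbb B^n$, with matching extremal values. Third, the closing appeal to Siciak is backwards: Siciak's theorem gives that a relatively closed pluripolar complement forces $K_V=K_{\mathbb B^n}|_V$, not the converse; the converse direction (no non-pluripolar part can be missing) is exactly the Pflug--Zwonek $L^{2}$-domain-of-holomorphy input that the paper uses, applied there locally at boundary points where $K(z,z)$ stays bounded. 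Without repairing these three points --- especially the second, which is the heart of your argument --- the proof does not go through; the paper's detour through $\tilde\Omega$ and Theorem \ref{biholo} is what circumvents all three difficulties at once.
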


\medskip

The pseudoconvexity in Theorem \ref{2nd} is a necessary assumption. For example, if we remove from $\mathbb B^n, n \geq 2$, a non-pluripolar compact subset $G$ of Lebesgue $\mathbb R^{2n}$-measure zero such that $\mathbb B^n \setminus G$ is connected, then by Hartogs' extension theorem the Bergman metric on $\mathbb B^n \setminus G$ extends to $\mathbb B^n$ so the assertion of Theorem \ref{2nd} fails.

\medskip

Based on Theorem \ref{biholo}, we prove Theorem \ref{2nd} in Section 4 by using a result  of 
Pflug and Zwonek \cite{PZ} on the so-called $L^2$-domain of holomorphy, 
which is the domain of existence of some $L^2$ holomorphic function. 
The boundary of a bounded $L^2$-domain of holomorphy contains no pluripolar part, so we get

\begin{cor} \label{Cor} Let $\Omega \subset \mathbb C^n$ be 
a bounded $L^2$-domain of holomorphy such that the holomorphic sectional curvature of the Bergman metric on $\Omega$ is identically equal to a negative constant $-c^2$. If  $\Omega$ satisfies Condition $(\star)$, then $\Omega$ is biholomorphic to the Euclidean ball $\mathbb B^n$ and $n = 2/c^2-1$.
\end{cor}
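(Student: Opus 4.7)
The plan is to combine Theorem \ref{2nd} with the Pflug--Zwonek characterization cited just above the corollary. Since every bounded $L^2$-domain of holomorphy is pseudoconvex, the hypotheses of Theorem \ref{2nd} are in place, producing a biholomorphism $F\colon\Omega\to\mathbb{B}^n\setminus E$ and the dimension relation $n=2/c^2-1$, where $E$ is a relatively closed pluripolar subset of $\mathbb{B}^n$. The whole task thus reduces to ruling out the possibility $E\neq\emptyset$.

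To do so, I would extend the inverse map across $E$. Because $\Omega$ is bounded, $F^{-1}\colon\mathbb{B}^n\setminus E\to\Omega$ is a bounded holomorphic map on the complement of a relatively closed pluripolar set, so the classical removable-singularity theorem delivers a holomorphic extension $\tilde{F}^{-1}\colon\mathbb{B}^n\to\overline{\Omega}$. A short continuity argument forces $\tilde{F}^{-1}(E)\subset\partial\Omega$: a point of $E$ mapped into $\Omega$ could be recovered by applying $F$ near its image and taking a limit, which would place that point in $\mathbb{B}^n\setminus E$, a contradiction. Moreover, the Jacobian of $\tilde{F}^{-1}$ is non-vanishing on the dense open set $\mathbb{B}^n\setminus E$, so $\tilde{F}^{-1}$ is non-degenerate, and the image $\tilde{F}^{-1}(E)$ is itself pluripolar in $\mathbb{C}^n$; one uses that non-degenerate holomorphic maps preserve pluripolarity, while any Jacobian zero locus is contained in a codimension-one analytic subset, which is itself pluripolar.

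If $E$ were non-empty, then $\tilde{F}^{-1}(E)$ would be a non-empty pluripolar subset of $\partial\Omega$, contradicting the Pflug--Zwonek result that the boundary of a bounded $L^2$-domain of holomorphy contains no pluripolar part. Hence $E=\emptyset$ and $\Omega$ is biholomorphic to $\mathbb{B}^n$ with $n=2/c^2-1$. The main technical subtlety I anticipate is the preservation of pluripolarity under $\tilde{F}^{-1}$ at points where the Jacobian may vanish, and matching this pluripolar image against the precise formulation of ``pluripolar part of the boundary'' used in \cite{PZ}; once those are pinned down, everything else amounts to chaining together Theorem \ref{2nd} and the cited removable-singularity and Pflug--Zwonek facts.
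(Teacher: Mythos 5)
Your overall strategy (pseudoconvexity of $L^2$-domains of holomorphy, then Theorem \ref{2nd}, then the Pflug--Zwonek property to kill $E$) is the same as the paper's, and the first reduction is fine. The problem is in how you rule out $E\neq\emptyset$. Your final contradiction reads: ``$\tilde F^{-1}(E)$ would be a non-empty pluripolar subset of $\partial\Omega$, contradicting the Pflug--Zwonek result that the boundary contains no pluripolar part.'' This conflates two different things. In the sense used in \cite{PZ} and in the proof of Theorem \ref{2nd}, the ``pluripolar part'' of $\partial\Omega$ is the set of boundary points $q$ admitting a neighbourhood $U$ with $U\setminus\Omega$ pluripolar (equivalently, where $\limsup_{z\to q}K(z,z)<\infty$). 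It is emphatically not the assertion that $\partial\Omega$ contains no non-empty pluripolar subset: every boundary contains pluripolar subsets (single points, for instance), and $\partial\mathbb B^n$ itself is full of them while $\mathbb B^n$ is an $L^2$-domain of holomorphy. So exhibiting a non-empty pluripolar set inside $\partial\Omega$ yields no contradiction. To repair your route you would have to show that for $e\in E$ the point $q=\tilde F^{-1}(e)$ has a whole neighbourhood $U$ with $U\setminus\Omega$ pluripolar; this requires $\tilde F^{-1}$ to be an open map near $e$ (discreteness of fibers is not automatic, since a positive-dimensional fiber could a priori hide inside the pluripolar set $E$) and an identification of $U\setminus\Omega$ with a subset of $\tilde F^{-1}(E)$. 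None of this is addressed.

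A secondary, lesser issue: the claim that $\tilde F^{-1}(E)$ is pluripolar because ``non-degenerate holomorphic maps preserve pluripolarity'' needs more than the observation that the Jacobian zero locus is pluripolar in the source; you must also control the image of that zero locus, e.g.\ by stratifying it and noting that holomorphic images of manifolds of dimension $<n$ are countable unions of pluripolar sets. This is doable but not free. The paper sidesteps all of this: in the proof of Theorem \ref{2nd} the exceptional set $E$ is constructed as $\tilde\Omega\cap\partial\Omega=\bigcup_j(U_j\cap\partial\Omega)$, i.e.\ it \emph{is} the pluripolar part of $\partial\Omega$ in the Pflug--Zwonek sense, sitting in $\partial\Omega$ from the start. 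For a bounded $L^2$-domain of holomorphy that part is empty, so $\tilde\Omega=\Omega$ and the biholomorphism of Theorem \ref{2nd} lands on all of $\mathbb B^n$ with no deleted set. I recommend replacing your extension argument by this direct appeal to the construction in the proof of Theorem \ref{2nd}.
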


\section{Bergman potentials with self-bounded gradient}

For a bounded domain $\Omega \subset \mathbb C^n$, its Bergman kernel  is defined as 
$$
K (z, t):=\sum \varphi_j(z) \overline{\varphi_j(t)}, \quad  z, t  \in \Omega,
$$ 
where $\{\varphi_j\}_{j=1}^{\infty}$ is a complete orthonormal basis for the space of $L^2$ holomorphic functions. This definition does not depend on the choice of the basis.
One checks that $\log K (z, z)$ is smooth and strictly plurisubharmonic, and thus  defines the Bergman metric $g$ as
 \begin{equation} \label{metric} 
 \sum_{\alpha, \beta=1}^n g_{\alpha \bar \beta} (z) X_\alpha \overline{X_\beta} \equiv \sum_{\alpha, \beta=1}^n \frac{\partial^2\log K (z, z)}{\partial z_\alpha \partial \overline {z_\beta} }X_\alpha\overline{X_\beta},
 \end{equation} 
for $z \in \Omega$ and $X\in \mathbb C^n$. 

\medskip

 We first prove the following lemma which links the Bergman representative coordinate and the Bergman-Calabi diastasis.
 
 \begin{lem} \label{key} 
Let $\Omega  $ be a bounded domain in $\mathbb C^n$. 
For any $z_0 \in \Omega$, let $\Phi_{z_0}$ be the Bergman-Calabi diastasis relative to $z_0$ defined by \eqref{dia}.
Then,
 \begin{enumerate}

\item [1)]  the length of $\partial    \Phi_{z_0}$  measured by the Bergman metric   at any $p\in  \Omega \setminus A_{z_0}$  is 
\begin{equation} \label{=}
|\partial    \Phi_{z_0}|^2_g (p)=\sum_{\alpha, \beta=1}^n  w_\alpha(z_0)   g_{ \alpha \bar \beta } (p) \overline {w_\beta(z_0)},
\end{equation}
where $T (z) =(w_1, ... , w_n)^{\tau}$ is the Bergman representative coordinate defined by \eqref {rep}.

\item [2)]   for any $p \in \Omega $, 
 $T$ maps $ \Omega \setminus A_{p}$ to a ball of radius $R$
if and only if for any $z_0 \in \Omega $, 
the length of $\partial \Phi_{z_0}$  measured by the Bergman metric on $\Omega \setminus A_{z_0}$
is less than $R$. Here, $R$ is a positive constant depending only on $ \Omega$.
   \end{enumerate}

\end{lem}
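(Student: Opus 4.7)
The plan for part 1) is to expand $\Phi_{z_0}$ as a sum of logarithms, differentiate, and then match the result against the Bergman representative coordinate via the conjugate symmetry $K(s,t)=\overline{K(t,s)}$. Writing
\[
\Phi_{z_0}(z) = \log K(z,z) - \log K(z, z_0) - \log K(z_0, z) + \log K(z_0, z_0),
\]
the third term is antiholomorphic in $z$ and the fourth is constant, so applying $\partial/\partial z_\alpha$ and evaluating at $p$ leaves only
\[
F_\alpha(p) := \frac{\partial \Phi_{z_0}}{\partial z_\alpha}(p) = \frac{\partial \log K(t,t)}{\partial t_\alpha}\bigg|_{t=p} - \frac{1}{K(p, z_0)}\frac{\partial K(t, z_0)}{\partial t_\alpha}\bigg|_{t=p}.
\]

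Next, I would rewrite the bracket in \eqref{rep} using the conjugate symmetry of $K$: one has $\frac{\partial K(z_0,t)}{\partial \overline{t_j}}\big|_{t=p} = \overline{\frac{\partial K(t,z_0)}{\partial t_j}\big|_{t=p}}$, $K(z_0,p)^{-1} = \overline{K(p,z_0)^{-1}}$, and, because $K(t,t)$ is real, $\frac{\partial \log K(t,t)}{\partial \overline{t_j}}\big|_{t=p}$ is the conjugate of $\frac{\partial \log K(t,t)}{\partial t_j}\big|_{t=p}$. Thus the bracket in \eqref{rep} equals $-\overline{F_j(p)}$, and since $\overline{g^{\bar j\alpha}(p)}=g^{j\bar\alpha}(p)$ this produces the clean identity
\[
w_\alpha(z_0) = -\sum_{j=1}^n g^{\bar j \alpha}(p)\,\overline{F_j(p)}.
\]
Substituting into the right-hand side of \eqref{=} and collapsing $\sum_{\alpha} g^{\bar j \alpha}(p)\,g_{\alpha\bar\beta}(p) = \delta^{\bar j}_{\bar\beta}$ leaves $\sum_{j,k} g^{k\bar j}(p)\, F_k(p)\,\overline{F_j(p)}$, which is by definition $|\partial\Phi_{z_0}|_g^2(p)$; this proves 1).

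For part 2), the heavy lifting is already done by 1): it identifies $|\partial \Phi_{z_0}|_g(p)$ with the $g(p)$-length of $T(z_0)$ at every pair $(z_0,p)$ with $K(z_0,p)\neq 0$, and conjugate symmetry gives $z_0 \in A_p$ iff $p \in A_{z_0}$. The two universally-quantified statements in 2) therefore describe the same condition after swapping the roles of $p$ and $z_0$. The main obstacle in the whole lemma is really the bookkeeping of Hermitian indices and conjugates; there is no deeper analytic input beyond the algebraic manipulation outlined above.
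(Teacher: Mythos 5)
Your proposal is correct and follows essentially the same route as the paper: both identify the bracket in \eqref{rep} evaluated at $z_0$ with (minus) the gradient of $\Phi_{z_0}$ at $p$ (you via $-\overline{F_j(p)}$ and conjugate symmetry, the paper via $-\nabla_{\bar z}|_{z=p}\Phi_{z_0}$ directly), then collapse the quadratic form with $G^{-1}G$, and deduce 2) from the symmetry $z_0\in A_p \Leftrightarrow p\in A_{z_0}$. No substantive difference.
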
 

\begin{proof}
1)
Denote the complex gradient operators by $\nabla_z   := (\frac{\partial  }{\partial   {z_1} } , ... ,  \frac{\partial  }{\partial   {z_n} } )^{\tau}$ and $\nabla_{\bar{z}}   := (\frac{\partial  }{\partial   \bar{z_1} } , ... ,  \frac{\partial  }{\partial   \bar{z_n} } )^{\tau}$, where ${\tau}$ is the transpose of a matrix.
By the definition of the Bergman-Calabi diastasis, 
 $$
 -\nabla_{\bar{z}} \Phi_{z_0}  =   \nabla_{\bar{z}} \log \frac{ |K(z, z_0)|^2} {K(z, z)  }=    K(z_0, z)^{-1}  \nabla_{\bar{z}}  K(z_0, z) -  {\nabla_{\bar{z}}  \log K(z, z) }.
 $$
 In particular,  at any $p\in  \Omega \setminus A_{z_0}$,
 $$
 - \left. \nabla_{\bar{z}} \right|_{z=p} \Phi_{z_0}   =  K(z_0, p)^{-1}  \left. \nabla_{\bar{z}} \right|_{z=p} K(z_0, z) -  {\left. \nabla_{\bar{z}} \right|_{z=p}  \log K(z, z) }.
 $$
 Then  \eqref{rep} implies  that 
$$
T(z_0) = - [G^{-1}(p)]^{\tau}   {\left. \nabla_{\bar{z}} \right|_{z=p}  \Phi_{z_0}},
$$
where  $G:=(g_{ \alpha \bar \beta })$ and $[G^{-1}]^{\tau}= (g^{\bar j \alpha })$ is the inverse transpose  of $G$.
Therefore, 
 \begin{align*}
\sum_{\alpha, \beta=1}^n  w_\alpha(z_0)   g_{ \alpha \bar \beta } (p) \overline {w_\beta(z_0)} &= \overline {T(z_0)} ^{\tau} G(p) T(z_0)\\
&=  (  {\left. \nabla_{ z}  \right|_{z=p}\Phi_{z_0}})^{\tau} G(p)^{-1}  G(p) [G(p)^{-1}]^{\tau}  \left. \nabla_{\bar{z}} \right|_{z=p} \Phi_{z_0}  \\
 &=  (  {\left. \nabla_{ z}  \right|_{z=p}\Phi_{z_0}})^{\tau}   [G(p)^{-1}]^{\tau}  \left. \nabla_{\bar{z}} \right|_{z=p} \Phi_{z_0}\\
&= |\partial    \Phi_{z_0}|^2_{g}(p), 
 \end{align*}
which is the square of the  length of  $ {\nabla_{ z}  \Phi_{z_0}}$ 
measured by the Bergman metric   at $p$.

\medskip

 2) The direction of $ \Longrightarrow $. For any $z_0 \in \Omega$,
 from 1) we know that at any $p \in  \Omega \setminus A_{z_0}$, the square of the length of $\partial    \Phi_{z_0}$ measured by the Bergman metric is $  {
\sum_{\alpha, \beta=1}^n  w_\alpha(z_0)   g_{ \alpha \bar \beta } (p) \overline {w_\beta(z_0)}
}$, which is less than $R^2$ by assumption. 
Since $p$ is arbitrary,  
the length of $\partial \Phi_{z_0}$ 
is  less than $R$ on $\Omega \setminus A_{z_0}$.

\medskip

 The direction of $ \Longleftarrow $. For any $p \in \Omega$, take the Bergman representative coordinate $T$, which is defined on $ \Omega \setminus A_p$  by \eqref{rep}.
For any $z_0 \in \Omega \setminus A_p$, the left hand side of \eqref{=} is  less than $R^2$ since $p \in \Omega \setminus A_{z_0}$. So is the right hand side.
Therefore, $T$  maps $ \Omega \setminus A_{p}$ to a ball defined as $ \{(w_1, ... , w_n)^{\tau} :   \sum_{\alpha, \beta=1}^n  w_\alpha   g_{ \alpha \bar \beta } (p) \overline {w_\beta } <R^2 \}$.

 \end{proof}

 \noindent{}{\bf Remark.} If at $p$ the Bergman metric satisfies
\begin{equation} \label{nor}
  g_{\alpha \bar \beta} (p) =\delta_{\alpha  \beta },
\end{equation} then the right hand side of \eqref{=} reduces to $|T(z_0)|^2$, and the ball in 2) is a Euclidean ball $ \mathbb B^n$. 

\medskip

Moreover, we get the following lemma.

\begin{lem} \label{key!} 
Let $\Omega \subset \mathbb C^n$ be a bounded domain whose Bergman metric has its holomorphic sectional curvature  identically equal to a negative constant $-c^2$. 
At $p \in \Omega$, assume that   the Bergman metric satisfies
 \eqref{nor}
  and
take the Bergman representative coordinate $T (z) =(w_1, ... , w_n)$ 
defined by \eqref{rep}.
Then,
 \begin{enumerate}

\item [1)]   the Bergman-Calabi diastasis relative to $p$ can be written as
\begin{equation}  \label{Sch}
\Phi_{p}( z) =   {\frac{-2}{c^2}} \log      \left (1 - \frac{c^2}{2} |T(z)|^2 \right ), \quad z \in  \Omega \setminus A_p;
\end{equation}  
\item [2)]  $T$ maps $ \Omega $ to the ball $ \mathbb B^n := \{ w \in \mathbb C^n:   |w|^2 < {2}{c^{-2}} \}$.

 \end{enumerate}

 \end{lem}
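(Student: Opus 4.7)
The plan is to identify the Bergman-Calabi diastasis $\Phi_{p}$ as Calabi's diastasis \cite{Ca} of the Bergman metric $g$ at $p$, and then transport it to the model ball via the representative coordinate $T$. Since $K(z,t)$ is holomorphic in $z$ and anti-holomorphic in $t$, the real-analytic K\"ahler potential $\log K(z,z)$ extends canonically to $\log K(z,t)$, which is holomorphic in $z$ and in $\bar t$. Substituting this extension into Calabi's construction reproduces exactly \eqref{dia}, so $\Phi_{p}$ is Calabi's diastasis of $g$ at $p$. In particular, $\Phi_{p}$ is insensitive to pluriharmonic changes of K\"ahler potential and is preserved under biholomorphic K\"ahler isometries of $g$ sending $p$ to the basepoint of comparison.

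Next I would analyze $T$ near $p$. Under the normalization \eqref{nor}, a short calculation from \eqref{rep} gives $T(p)=0$ and $\partial_{\beta}T_{\alpha}(p)=\delta_{\alpha\beta}$, so $T$ is a local biholomorphism at $p$. In the $T$-chart, the push-forward of $g$ is a K\"ahler metric of constant holomorphic sectional curvature $-c^{2}$ whose value at the origin is the identity matrix; the Bergman metric of $\mathbb{B}^{n}=\{|w|^{2}<2/c^{2}\}$ shares both of these properties at the origin. Bochner-type rigidity for K\"ahler metrics of constant holomorphic sectional curvature, together with the $U(n)$-symmetry of the ball at its center, produces a local K\"ahler isometry between the two whose differential at the origin is the identity; such an isometry must itself be the identity on a neighborhood. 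Hence the two K\"ahler potentials differ by a pluriharmonic term, and the invariance of Calabi's diastasis under this ambiguity yields
$$
\Phi_{p}(z)\;=\;-\frac{2}{c^{2}}\log\!\left(1-\frac{c^{2}}{2}|T(z)|^{2}\right)
$$
on a neighborhood of $p$. Both sides are real-analytic on the connected set $\Omega\setminus A_{p}$, so the identity propagates to all of $\Omega\setminus A_{p}$, establishing 1).

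Part 2) is then immediate: the formula in 1) forces $1-\tfrac{c^{2}}{2}|T(z)|^{2}>0$ on $\Omega\setminus A_{p}$, i.e.\ $|T(z)|^{2}<2/c^{2}$ there. Since $T$ is holomorphic and uniformly bounded on $\Omega\setminus A_{p}$ while $A_{p}$ is a proper analytic subvariety of $\Omega$, Riemann's removable singularity theorem extends $T$ holomorphically to $\Omega$ with $|T|^{2}\le 2/c^{2}$; the maximum principle, applied to the subharmonic $|T|^{2}$ (non-constant because $dT(p)$ is the identity), rules out interior maxima and therefore $T(\Omega)\subset\mathbb{B}^{n}$.

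The delicate step is the local rigidity used in the second paragraph, namely that two K\"ahler metrics of constant holomorphic sectional curvature with the same $1$-jet at a common point coincide in a neighborhood. One can verify this directly by expanding $\log K(z,z)$ as a power series in the representative coordinates and observing that the constant curvature condition, combined with the normalization \eqref{nor} and $T(p)=0$, $\partial_{\beta}T_{\alpha}(p)=\delta_{\alpha\beta}$, determines every Taylor coefficient recursively; this matches term by term with the corresponding expansion $-\tfrac{2}{c^{2}}\log(1-\tfrac{c^{2}}{2}|w|^{2})=\sum_{k\geq 1}\tfrac{1}{k}(\tfrac{c^{2}}{2})^{k-1}|w|^{2k}\cdot\tfrac{2}{c^{2}}$ coming from the ball.
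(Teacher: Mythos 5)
Your local analysis is sound and essentially matches the paper's: the paper invokes Lu's local normal form $K(z,z)=\bigl(1-\tfrac{c^2}{2}|T(z)|^2\bigr)^{-2/c^2}e^{f(T(z))+\overline{f(T(z))}}$ on a neighbourhood $U_p$ and polarizes it to compute $\Phi_p$ there, which is the same rigidity-plus-diastasis-invariance content you describe. The genuine gap is in the globalization. You assert that ``both sides are real-analytic on the connected set $\Omega\setminus A_p$, so the identity propagates,'' but the right-hand side $-\tfrac{2}{c^2}\log\bigl(1-\tfrac{c^2}{2}|T(z)|^2\bigr)$ is only defined where $|T(z)|^2<2/c^2$, and at this stage you do not yet know that $T$ maps $\Omega\setminus A_p$ into that ball --- that is precisely the content of part 2). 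Your subsequent deduction ``the formula in 1) forces $1-\tfrac{c^2}{2}|T(z)|^2>0$'' is therefore circular: the positivity is a precondition for the formula to make sense, not a consequence of it. The paper closes this gap by first proving the identity only on $\Omega':=\{z\in\Omega\setminus A_p:\ |T(z)|^2<2/c^2\}$ and then showing $\Omega'=\Omega\setminus A_p$ by a first-exit argument: if some point of $\Omega\setminus A_p$ were mapped onto or outside the sphere, a path from $U_p$ to it would have a first point $q_1$ with $|T(q_1)|^2=2/c^2$, and approaching $q_1$ through $\Omega'$ makes the right-hand side blow up while $\Phi_p(q_1)$ stays finite.

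A clean repair of your propagation step is to exponentiate before applying the identity theorem: the identity $\exp\bigl(-\tfrac{c^2}{2}\Phi_p(z)\bigr)=1-\tfrac{c^2}{2}|T(z)|^2$ has both sides genuinely real-analytic on all of the connected set $\Omega\setminus A_p$ (the left side because $|K(z,p)|^2/(K(z,z)K(p,p))$ is positive and real-analytic there, the right side because $T$ is holomorphic there), so it does propagate from $U_p$; since the left side is strictly positive you then obtain $|T|^2<2/c^2$ on $\Omega\setminus A_p$ for free, and taking logarithms recovers 1). With that amendment, your argument for 2) (Riemann removable singularity across $A_p$ plus the maximum principle for the plurisubharmonic function $|T|^2$) is exactly the paper's.
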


\begin{proof}

At any $p \in \Omega$, by \cite{Lu}
there exists a neighbourhood $U_p$ such that the Bergman kernel can be locally decomposed as
\begin{equation}  \label{K(z, z)}
K(z, z)=  \left (1 - \frac{c^2}{2} |T(z)|^2 \right )^{\frac{-2}{c^2}} e^{f(T(z))+\overline{f(T(z))}}, \quad z\in U_p,
\end{equation}
where $f$ is holomorphic on $U_p$.
The map $T$ defined by \eqref{rep} is holomorphic on $\Omega \setminus A_p$, where $A_p:=\{z\in \Omega \, | \,K(z, p) =0 \, \}$ is the zero set of the Bergman kernel $K(\cdot , p)$.
Let  $\Omega^{\prime}:= \{z \in \Omega \setminus A_p : T(z) \in \mathbb B^n \}$ be the set of points in $\Omega \setminus A_p$ that are mapped into the ball.  In particular, $U_p \subset \Omega^{\prime}$.
By \eqref{K(z, z)} and the theory of power series, one may duplicate the variable with its conjugate so that  the full Bergman kernel can be complex analytically continued as
\begin{equation}  \label{K(z, z_0)}
K(z, {z_0})= \left (1 - \frac{c^2}{2} \sum_{\alpha=1}^n  w_\alpha(z)     \overline {w_\alpha(z_0)}   \right )^{\frac{-2}{c^2}}   e^{f(T(z))+\overline{f(T({z_0}))}}, \quad z, z_0\in U_p.
\end{equation}
Then for any $z_0\in U_p$,
 \begin{align*}
\Phi_{z_0}( z)&= \log \frac{\left (1 - \frac{c^2}{2} |T(z)|^2 \right )^{\frac{-2}{c^2}} e^{f(T(z))+\overline{f(T(z))}}   \left (1 - \frac{c^2}{2} |T(z_0)|^2 \right )^{\frac{-2}{c^2}} e^{f(T(z_0))+\overline{f(T(z_0))}}  }{  \left |1 - \frac{c^2}{2}   \sum_{\alpha=1}^n  w_\alpha(z)     \overline {w_\alpha(z_0)}   \right |^{\frac{-4}{c^2}} |e^{f(T(z))+\overline{f(T({z_0}))}}  |^2}\\
& =  {\frac{-2}{c^2}}   \log  \left[    \left (1 - \frac{c^2}{2} |T(z)|^2 \right )  \left (1 - \frac{c^2}{2} |T(z_0)|^2 \right )  \left  | 1 - \frac{c^2}{2}   \sum_{\alpha=1}^n  w_\alpha(z)     \overline {w_\alpha(z_0)}   \right  |^{ -2}  \right  ], \quad z\in U_p,
 \end{align*}
which yields that
$$
\Phi_{p}(z_0)= \Phi_{z_0}(p) = {\frac{-2}{c^2}} \log  \left (1 - \frac{c^2}{2} |T(z_0)|^2 \right ).
$$
 On the other hand, the Bergman-Calabi diastasis $\Phi_{p}(z)$ relative to $p$ is defined on $\Omega \setminus A_p$ and thus on $\Omega^{\prime}$, where ${\frac{-2}{c^2}} \log  \left (1 - \frac{c^2}{2} |T(z )|^2 \right )$ can be defined. Since these two real-analytic  functions coincide on  $U_p$, they are identical to each other on $\Omega^{\prime}$. That is, 
\begin{equation} \label{on Omega prime}
\Phi_{p}(z )=  {\frac{-2}{c^2}} \log  \left (1 - \frac{c^2}{2} |T(z )|^2 \right ), \quad z \in \Omega^{\prime}.
\end{equation} 

1) We claim that  no point in  $\Omega \setminus A_p$ is mapped outside the ball $\mathbb B^n$ by $T$. 

If not, suppose there exists some point $q \in  \Omega \setminus A_p$ that is mapped to $\{ w \in \mathbb C^n:   |w|^2 \geq {2}{c^{-2}} \}$. Choose some point $q_0 \in \Omega^{\prime}$.  Since $\Omega \setminus A_p$ is path-connected, one can choose a path $\gamma$ that connects $q_0$ and $q$. Suppose under $T$ the image of $\gamma$   intersects $\partial   \mathbb B^n$ firstly at some point $T(q_1)$.

 Along the path  $\gamma$ take a sequence of points $(q_l)_{l \in \mathbb N} \subset \Omega^{\prime}$ such that $q_l  \to q_1$. 
 Then by \eqref{on Omega prime},
$$
\Phi_{p}(q_l )=  {\frac{-2}{c^2}} \log  \left (1 - \frac{c^2}{2} |T(q_l)|^2 \right ).
$$
Here, as  $q_l  \to q_1$,  the left hand side is finite but  the right hand side  blows up to infinity.  This  is  a contradiction, so we have thus proved our claim, which says that $\Omega^{\prime}=  \Omega \setminus A_p $.
Therefore, \eqref{on Omega prime} in fact holds on $\Omega \setminus A_p $.

\medskip

2) Since $T$ maps $ \Omega \setminus A_p$ to the ball $ \mathbb B^n$ and satisfies 
\begin{equation} \label{bound}
|T(z)|^2<  {2}{ c^{-2}},
\end{equation}
by the Riemann removable singularity theorem, $T$ extends across the analytic variety $A_p$ to the whole domain $\Omega$ with $|T(z)|^2 \leq {2}{ c^{-2}}.$
  The maximum modulus principle yields  that \eqref{bound} in fact holds  on $\Omega$.

\end{proof}

 Lemma \ref{key!} implies Theorem \ref{zero}, part 1), which says that a bounded domain is Lu Qi-Keng if its Bergman metric has  constant holomorphic sectional curvature.

\begin{proof} [Proof of Theorem \ref{zero}, part 1)]
We first assume that at $p $ the Bergman metric satisfies \eqref{nor}. Let $A_{p} $ be the zero set  of the Bergman kernel $K(\cdot , p)$. Suppose
$A_{p} \neq \emptyset$.
Then take some point $q \in A_{p}$ and take a sequence of points $(z_j)_{j\in \mathbb N} \subset \Omega  \setminus A_p$ such that $z_j \to q$.
By \eqref{Sch},
$$
\Phi_{p}( z_j) =  {\frac{-2}{c^2}} \log      \left (1 - \frac{c^2}{2} |T(z_j)|^2 \right ).
$$
Letting $z_j  \to q$, we see that the above left hand side blows up to infinity, but  the right hand side is finite. This  is a contradiction, so $A_{p} = \emptyset$. Generally, for each  $p \in \Omega$, 
one performs a possible linear transformation $F$ from $\Omega$ to $\Omega_1$ such that the Bergman metric on $\Omega_1 $ at $F(p)$ satisfies 
\eqref{nor}. Since $F$ is a biholomorphism, the Bergman metric on $\Omega_1 $ also has constant holomorphic sectional curvature. 
Then, by the previous argument $K_{\Omega_1}(\cdot , F(p))$  has no zero set, so does $K(\cdot , p)$ due to the transformation rule of the Bergman kernel.

\end{proof}

Using Theorem \ref{zero}, part 1), and following the arguments of Bochner \cite{Bo47} and Lu \cite {Lu}, we prove
the remaining parts  of Theorem \ref{zero},  which can be seen as a generalization of Lemma \ref{key!}.

\begin{proof} [Proof of Theorem \ref{zero}, the remaining parts]
For simplicity, let $K=K(\cdot , \cdot)$ denote the Bergman kernel on $\Omega$.
The holomorphic sectional curvature of the Bergman metric   is defined as
 $$
 R_{\Omega} (z; X):=\left( \sum_{\alpha, \beta=1}^n g_{\alpha \bar \beta} X_\alpha X_{\bar \beta}\right)^{-2} \sum_{i, j, k, l=1}^n R_{i\bar j  k\bar l} \overline X_i X_j X_k \overline X_l, \quad z \in \Omega, \, X\in \mathbb C^n,
 $$
 where the curvature tensor is
 given by
\begin{align*}
R_{i\bar j  k\bar l}&=-\frac{\partial^4}{\partial w_i \partial \overline {w_j} \partial w_k \partial \overline {z_l} } \log K  + \sum_{\alpha, \beta=1}^n  g^{\bar \beta \alpha}  \frac{\partial^3 }{\partial w_i \partial   {w_k} \partial \overline {w_\beta} } \log K  \frac{\partial^3 }{\partial \overline {w_j} \partial \overline {w_l} \partial {w_\alpha} }  \log K \\
&=g_{i\bar j} g_{k\bar l} + g_{i\bar l} g_{k\bar j} - K^{-2} (K K_{i \overline {j} k  \overline {l} } - K_{i  k} K_{\bar{j} \bar l}) +
K^{-4} \sum _{\alpha, \beta=1}^n g^{\bar \beta \alpha}  (K K_{i k \overline {\beta}} - K_{i k } K_{\overline {\beta}}) (K K_{ \overline {jl} \alpha} - K_{ \overline {jl} } K_{\alpha } ).
\end{align*}
If the curvature is identically $-c^2$, then  (cf. \cite{Bo47, Hua54})
$$
R_{i\bar j  k\bar l}=\frac{-c^2}{2} (g_{i\bar j} g_{k\bar l} +g_{i\bar l} g_{k\bar j}),
$$
which implies that
\begin{equation} \label{const}
\frac{\partial^4}{\partial w_i \partial \overline {w_j} \partial w_k \partial \overline {w_l} } \log K= \sum_{\alpha, \beta=1}^n  g^{\bar \beta \alpha}  \frac{\partial^3 }{\partial w_i \partial   {w_k} \partial \overline {w_\beta} } \log K  \frac{\partial^3 }{\partial \overline {w_j} \partial \overline {w_l} \partial {w_\alpha} }  \log K  + \frac{c^2}{2} (g_{i\bar j} g_{k\bar l} +g_{i\bar l} g_{k\bar l}).
\end{equation}

Consider the test function 
 $$
\phi(w):= \frac{-2}{c^2} \log  \left (1 - \frac{c^2}{2} \sum_{\alpha, \beta=1}^n  w_\alpha   g_{ \alpha \bar \beta } (p) \overline {w_\beta } \right ) .
 $$
Then, $\phi$ induces a K\"{a}hler metric whose holomorphic sectional curvature is also identically equal to $-c^2$, with a similar identity as \eqref{const}.
Direct computations and \cite[Lemma 2]{Lu} yield  that
$$
\left. \frac{\partial^2 \log K }{\partial w_\alpha \partial \overline {w_\beta} } \right|_{w=0}=\left. \frac{\partial^2 \phi(w)}{\partial w_\alpha \partial \overline {w_\beta} } \right|_{w=0}=g_{ \alpha \bar \beta } (p),
$$
$$
\left. \frac{\partial^3 \log K }{\partial w_{\gamma} \partial w_\alpha \partial \overline {w_\beta} } \right|_{w=0}=\left. \frac{\partial^2 \phi(w)}{\partial w_{\gamma} \partial w_\alpha \partial \overline {w_\beta} } \right|_{w=0}=0.
$$
 The partial derivatives of order 4 can be computed directly from \eqref{const}; furthermore, the partial derivatives of higher order can be successively computed by taking all possible successive derivatives of \eqref{const}. As a result, they all vanish at $w=0$.
By the uniqueness of the Taylor expansion, it holds that
$$ 
K(z, z)=  \left (1 - \frac{c^2}{2} \sum_{\alpha, \beta=1}^n  w_\alpha(z)   g_{ \alpha \bar \beta } (p) \overline {w_\beta(z)} \} \right )^{\frac{-2}{c^2}} e^{F(T(z))+\overline{F(T(z))}}, \quad z\in U_p,
$$
for some holomorphic function $F$. By the theory of power series, one gets on $U_p$ the local formula 
\eqref{Lu_g}. 
Let  $\Omega^{\prime}:= \{z \in \Omega : \sum_{\alpha, \beta=1}^n  w_\alpha (z)   g_{ \alpha \bar \beta } (p) \overline {w_\beta } (z)  < {2}{c^{-2}} \}$ be the set of points in $\Omega$ that are mapped into the ball. 
Similar to the proof of Lemma \ref{key!}, one checks that \eqref{Lu_g} in fact hold true for each $ z \in \Omega^{\prime}$.
By the contradiction argument as demonstrated in the proof of Lemma \ref{key!}, we further observe that $\Omega^{\prime}=\Omega$ and thus have proved both  2) and 3).

\medskip

Part 4) then follows from 2) of Lemma \ref{key}.

\end{proof}

Part 4) of Theorem \ref{zero} says that the potential $\Phi_{z_0}$ has a self-bounded gradient  measured by the Bergman metric. On the Cartan classical domains, the first author and Li and Treuer in \cite{DLT} computed explicitly the (bounded) length of $\partial   \log K(z, z)$. Lee in \cite{L} studied K\"{a}hler-Einstein metrics admitting a global potential whose gradient has a constant length.  
   However, the potential $\log K(z, z)$ does not always have a self-bounded gradient in the Bergman metric on general bounded symmetric domains. An example of such a domain, which is biholomorphic to the bidisc, was constructed by Zimmer in \cite{Z}. 
 For convenience, define the following property.
 
 \begin{definition} A domain $\Omega \subset \mathbb C^n$ has Property ($\star\star$) if 
$$
|\partial \log K(z, z)|_{g} 
$$
is uniformly bounded on $\Omega$.
\end{definition}

Notice that Property ($\star\star$) is equivalent to: there exists $C > 0$ such that 
\begin{align}
\label{eq:property_b3_prime_equiv}
\abs{\partial \log K(z, z) (X)} \leq C\sqrt{g_{(z)}\left( X , \overline X\right)}
\end{align}
for all $X \in \mathbb C^n$ and $z \in \Omega$. Zimmer's example shows that Property ($\star\star$) is not invariant under biholomorphisms, and by imitating his construction we are able to prove

\begin{pro}\label{prop:not_inv} There exists a bounded domain $\Omega$ biholomorphic to $\mathbb B^2$ which does not have Property ($\star\star$). 
\end{pro}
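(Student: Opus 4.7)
The plan is to construct $\Omega$ as the image of $\mathbb B^2$ under an explicit ``shear'' biholomorphism of the form $F(z_1, z_2) = (z_1,\, h(z_1) z_2)$, with $h$ a bounded, zero-free holomorphic function on the unit disc $\mathbb D$ chosen so that the pluriharmonic correction in the Bergman kernel transformation rule produces an unbounded contribution to $|\partial \log K_\Omega|_{g_\Omega}$.

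First I would set up the transformation. Since $\det F'(z) = h(z_1)$ is non-vanishing and $F$ is patently injective, $F$ is a biholomorphism onto its bounded image $\Omega := F(\mathbb B^2)$. Writing $f := F^{-1}$, the Bergman kernel transformation rule reads
\[
\log K_\Omega(w, w) \;=\; -\log|h(w_1)|^2 \,+\, \log K_{\mathbb B^2}(f(w), f(w)),
\]
and the pluriharmonicity of the first summand gives $g_\Omega = f^{*} g_{\mathbb B^2}$. Pulling back through $F$, the $g_\Omega$-length of $\partial \log K_\Omega$ at $F(\zeta)$ becomes the $g_{\mathbb B^2}$-length of $-(h'/h)(\zeta_1)\,d\zeta_1 + \partial \log K_{\mathbb B^2}(\zeta, \zeta)$ at $\zeta \in \mathbb B^2$. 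A direct computation from $\log K_{\mathbb B^2}(\zeta,\zeta) = -3\log(1-|\zeta|^2) + c$ yields $|\partial \log K_{\mathbb B^2}|^2_{g_{\mathbb B^2}}(\zeta) = 3|\zeta|^2 < 3$, so by the reverse triangle inequality it suffices to arrange that $|(h'/h)\,d\zeta_1|_{g_{\mathbb B^2}}$ is unbounded on $\mathbb B^2$.

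For this I would take the atomic singular inner function $h(\zeta_1) := \exp\!\bigl(-(1+\zeta_1)/(1-\zeta_1)\bigr)$, which is bounded by $1$ and nowhere vanishing on $\mathbb D$, with $h'/h = -2/(1-\zeta_1)^2$. Along $\zeta = (r, 0)$ the metric $g_{\mathbb B^2}$ is diagonal with $g^{1\bar 1} = (1-r^2)^2/3$, giving
\[
\bigl|(h'/h)\,d\zeta_1\bigr|^2_{g_{\mathbb B^2}}(r,0) \;=\; \frac{(1-r^2)^2}{3}\cdot\frac{4}{(1-r)^4} \;=\; \frac{4(1+r)^2}{3(1-r)^2} \;\longrightarrow\; +\infty
\]
as $r \to 1^-$. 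The main subtlety is precisely this choice of $h$: the Bergman metric damps $d\zeta_1$ by the factor $(1-|\zeta|^2)^2$, so the Schwarz--Pick estimate rules out any $h$ that is both bounded above and bounded away from zero, and one must exploit the boundary vanishing of a singular inner function to beat the damping. Once this is in place, the reverse triangle inequality delivers an unbounded $|\partial \log K_\Omega|_{g_\Omega}$ on $\Omega$, so $\Omega$ is biholomorphic to $\mathbb B^2$ but fails Property~($\star\star$).
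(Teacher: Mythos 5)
Your proof is correct and follows essentially the same strategy as the paper's: a shear biholomorphism of $\mathbb B^2$ whose Jacobian determinant is a bounded, zero\nobreakdash-free holomorphic multiplier with unbounded logarithmic derivative measured in the Bergman metric, combined with the transformation rule and the boundedness of $|\partial\log K_{\mathbb B^2}|_{g_{\mathbb B^2}}$. The only substantive difference is the choice of multiplier --- you use the singular inner function $\exp\bigl(-(1+\zeta_1)/(1-\zeta_1)\bigr)$ and verify unboundedness by direct computation along $(r,0)$, whereas the paper takes a covering map $\psi:\mathbb D\to\mathbb D\setminus\{0\}$ and invokes the Poincar\'e isometry property to get $|\psi'/\psi|(1-|w|^2)=2\log(1/|\psi(w)|)$ unbounded --- and your write-up is in fact slightly more explicit than the paper's about the reverse-triangle-inequality step reducing the problem to the multiplier term.
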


\begin{proof} For a holomorphic function $\psi: \mathbb D \rightarrow \mathbb D-\{0\}$ define
\begin{align*}
F_\psi &: \mathbb B^2 \quad \to \quad \mathbb C^2\\
 &(z_1,z_2) \mapsto \left( \psi(z_2)z_1, z_2\right).
\end{align*}
Since $\psi$ is nowhere vanishing, $F$ is injective and hence is a biholomorphism onto its image. Let $\Omega_{\psi} := F_\psi(\mathbb B^2) \subset \mathbb B^2$. We claim that there exists some $\psi$ such that $\Omega_\psi$ does not have Property ($\star\star$). 

Notice that
\begin{align*}
F_\psi^\prime(z) = \begin{pmatrix} \psi(z_2) & \psi^\prime(z_2)z_1 \\  0 & 1 \end{pmatrix}.
\end{align*}
So $\det F_\psi^\prime(z_1, z_2) = \psi(z_2)$ and 
\begin{align*}
\abs{ \frac{\partial}{\partial z_2} \log \abs{\det F_\psi^\prime(z_1, z_2)}^2} = \abs{ \frac{\partial}{\partial z_2} \log \abs{\psi(z_2)}^2}=\abs{\frac{\psi^\prime(z_2)}{\psi(z_2)} }.
\end{align*}
Further, if $g$ is the Bergman metric on $\mathbb B^2$, then for $X:= (0, \frac{\partial}{\partial z_2}) \in \mathbb C^2$,
\begin{align*}
g_{(z_1,z_2)}\left( X, \overline X \right) =  \frac{ 1-\abs{z_1}^2 }{(1-\abs{z_1}^2- \abs{z_2}^2)^2}.
\end{align*}
So 
\begin{align*}
\frac{\abs{ \frac{\partial}{\partial z_2} \log \abs{\det F_\psi^\prime(z_1, z_2)}^2} }{\sqrt{g_{(z_1,z_2)}\left( X , \overline X \right)}}=\abs{\frac{\psi^\prime(z_2)}{\psi(z_2)} }  \frac{1-\abs{z_1}^2-\abs{z_2}^2} { \sqrt{1-\abs{z_1}^2 }}.
\end{align*}
In particular, putting $z_1=0$, we get
\begin{equation} \label{unbdd}
\frac{\abs{ \frac{\partial}{\partial z_2} \log \abs{\det F_\psi^\prime(0, z_2)}^2} }{\sqrt{g_{(0, z_2)}\left( X ,  \overline X \right)}}=\abs{\frac{\psi^\prime(z_2)}{\psi(z_2)} }   (1-\abs{z_2 }^2).
\end{equation}

Let $\psi : \mathbb D \rightarrow \mathbb D -\{0\}$ be a covering map. Then $\psi$ is a infinitesimial isometry  with respect  to the Poincar\'e metrics and so 
\begin{align*}
\frac{\abs{\psi^\prime(w)}}{2\abs{\psi(w)} \log \frac{1}{\abs{\psi(w)}}} = \frac{1}{1-\abs{w}^2}
\end{align*}
for all $w \in \mathbb D$. Then 
\begin{align*}
\frac{\abs{\psi^\prime(w)}}{\abs{\psi(w)}}\left(1-\abs{w}^2\right) =  2\log \frac{1}{\abs{\psi(w)}}
\end{align*}
is unbounded since $\psi(\mathbb D) = \mathbb D -\{0\}$. For this choice of $\psi$, either side of \eqref{unbdd} is unbounded so the domain $\Omega_\psi$ does not have Property ($\star\star$).

\end{proof}

Proposition \ref{prop:not_inv} says that under the constant negative holomorphic sectional curvature assumption, the bounded domain may not have Property ($\star\star$) in general.

    \section{Proof of Theorem \ref {biholo}}

A bounded domain $\Omega$ is said to be hyperconvex, if there exists a continuous plurisubharmonic function $\varphi $ such that the sublevel set $\{z\in \Omega: \varphi(z) <c\}$ is relatively compact in $\Omega$ for all $c<0$. Next, we will show  the implication (ii) $\Longrightarrow$ (iii) under the constant holomorphic sectional curvature assumption in Theorem \ref{biholo}.

\begin{proof} [Proof of Theorem \ref {biholo},  (ii) $\Longrightarrow$ (iii)]
By the definition of the Bergman kernel and the Cauchy-Schwarz inequality, the Bergman-Calabi diastasis $\Phi_{z_0} \geq 0$.
By 1) of Theorem \ref{zero}, $\Phi_{z_0}$ is non-negatively defined on $\Omega$.
We claim that the negative continuous function $\varphi:=-( 4^{-1}{c^{2}}  \Phi_{z_0} + 1)^{-1}$ is an exhaustion function for $\Omega$, i.e., the sublevel set $\{z\in \Omega: \varphi(z) <N\}$ is relatively compact in $\Omega$ for all $N<0$. If not, then there exists a point $w\in \partial \Omega \cap \{z\in \Omega:  4^{-1}{c^{2}}   \Phi_{z_0} + 1 < {-N^{-1}}\}$. Taking a sequence of points $(z_j)_{j \in \mathbb N}\subset  \Omega$ that tends to $w$, we know that $\lim_{\Omega \ni z_j\to w}   \Phi_{z_0} (z_j)  < {-4 c^{-2} (1+N^{-1})}  <+\infty$, which contradicts the fact that $\Phi_{z_0}$ blows up to infinity at $\partial \Omega $.
\medskip

To verify the plurisubharmonicity of $\varphi$ by Theorem \ref{zero}, we make the following computation
\begin{align*}
\partial \overline  \partial  \varphi  &= \partial \left(  (  4^{-1}{c^{2}} \Phi_{z_0} + 1)^{-2}  4^{-1}{c^{2}}  \overline  \partial \Phi_{z_0} \right)\\
&=-2 ( 4^{-1}{c^{2}}  \Phi_{z_0} + 1)^{-3}    4^{-1}{c^{2}}  \partial       \Phi_{z_0}    4^{-1}{c^{2}}  \overline  \partial  \Phi_{z_0}   + (  4^{-1}{c^{2}}   \Phi_{z_0} + 1)^{-2}   4^{-1}{c^{2}}  \partial \overline  \partial    \Phi_{z_0}  \\
&=( 4^{-1}{c^{2}}  \Phi_{z_0} + 1)^{-3} \left(  ( 4^{-1}{c^{2}} \Phi_{z_0} + 1) 4^{-1}{c^{2}}  g  -2^{-1}{c^{2}}  \partial    \Phi_{z_0}  (4 c^{-2})^{-1}  \overline  \partial      \Phi_{z_0}   \right)\\
 & \ge   (  4^{-1}{c^{2}}  \Phi_{z_0} + 1)^{-3} 2 (4 c^{-2})^{-2} \left(  2 c^{-2}  g  -  \partial   \Phi_{z_0}  \overline  \partial    \Phi_{z_0}   \right) \\
 & > 0,
\end{align*} 
which implies the hyperconvexity of $\Omega$.

\end{proof}

 \begin{proof} [Proof of Theorem \ref {biholo},  (iv) $\Longrightarrow$ (ii)]

It suffices to verify that for some fixed $p \in \Omega$, the Bergman-Calabi diastasis $\Phi_{p}( z)$ 
blows up to infinity at $\partial \Omega$.
After a possible linear transformation of domains, we may assume that \eqref{nor} holds at $p$.
By \eqref{Sch} and Theorem \ref{zero},
$$
\Phi_{p}( z) =  {\frac{-2}{c^2}} \log      \left (1 - \frac{c^2}{2} |T(z)|^2 \right ), \quad z \in  \Omega,
$$
where $T (z) =(w_1, ... , w_n)$ is the Bergman representative coordinate at $p$ defined by \eqref{rep}.

\medskip

By (iv), $T: \Omega \to \mathbb B^n$ is a biholomorphic map  that sends $p$ to $w=0$.
Moreover, the map $T$ is proper. As $z$ approaches $\partial \Omega$,
$\Phi_{p}( z)$ blows up to infinity uniformly as $T(z)$ approaches $\partial \mathbb B^n= \{w \in \mathbb C^n: |w|^2 = 2c^{-2} \}$. 
That is,  (iv) $\Longrightarrow$ (ii).

 \end{proof}

We also give a direct proof of $(ii) \Longrightarrow (iv)$ without relying on Lu's theorem.

 \begin{proof} [A direct proof of (ii) $\Longrightarrow $(iv)]
For $z_0\in \Omega$, after a possible linear transformation of domains, we may assume that \eqref{nor} holds at $z_0$.
By \eqref{Sch} and Theorem \ref{zero},
$$
\Phi_{z_0}( z) =  {\frac{-2}{c^2}} \log      \left (1 - \frac{c^2}{2} |T(z)|^2 \right ), \quad z \in  \Omega,
$$
where $T (z) =(w_1, ... , w_n)$ is the Bergman representative coordinate at $z_0$ defined by \eqref{rep}. Thus, $\Phi_{z_0}(z)$ blows up  to infinity if and only if $T(z)$ approaches $\partial \mathbb B^n= \{w \in \mathbb C^n: |w|^2 = 2c^{-2} \}$. Therefore, by Condition (ii) and Theorem \ref{zero}, $z$ approaches $\partial \Omega$ if and only if $T(z)$ approaches $\partial \mathbb B^n$. Consequently, the holomorphic map $T: \Omega \to \mathbb B^n$ is proper. 
 The Remmert proper mapping theorem then implies that the map $T: \Omega \to \mathbb B^n$ is a branched covering, and the branched points are exactly those at which the determinant of the complex Jacobian of $T$ vanishes.
 However, by our Proposition \ref {vol} the determinant $D_T(z)$ does not vanish, so  $T: \Omega \to \mathbb B^n$ is  an unbranched covering map. As $\mathbb B^n$ is simply connected, $T$ becomes a biholomorphism.

 \end{proof}

The rest of this section is devoted to the proof of the following proposition.

 \begin{pro}\label{annulus} There exist bounded domains in $\mathbb C^n$ which satisfy Condition (ii) but are not Bergman complete. 
\end{pro}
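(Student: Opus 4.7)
My plan is to exhibit one or more explicit bounded domains $\Omega \subset \mathbb C^n$ that satisfy Condition (ii) of Theorem \ref{biholo} yet fail to be Bergman complete. Since Condition (ii) implies Bergman exhaustiveness (Proposition \ref{Mok-Yau}), and since in one complex variable Bergman exhaustiveness already entails Bergman completeness by Chen's theorem \cite{C00}, any such example must live in dimension $n \geq 2$. The natural class to explore, in line with the label of this proposition, is that of annular-type domains: Cartesian products $\Omega_1 \times \cdots \times \Omega_n$ in which at least one factor is an annular subdomain of $\mathbb C$, so that the Bergman kernel factors as a product and the diastasis splits as $\Phi^{\Omega_1 \times \cdots \times \Omega_n}_{(a_1,\ldots,a_n)}(z) = \sum_{j=1}^n \Phi^{\Omega_j}_{a_j}(z_j)$; or, alternatively, a subdomain of $\mathbb B^n$ obtained by excising a suitable relatively closed non-pluripolar set arranged in an annular shape.

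Once a candidate domain $\Omega$ is fixed, the verification proceeds in two independent pieces. First, I would establish Condition (ii) by estimating $K(z,z)$ and $|K(z,z_0)|^2$ along sequences $(z_k)\to\partial \Omega$, showing that the ratio tends to infinity at every boundary point; the product formula above, together with the biholomorphic invariance of $\Phi_{z_0}$ from Remark (b), lets me reduce this check to simpler model pieces at each boundary component. Second, to refute Bergman completeness, I would exhibit a Cauchy sequence for the Bergman metric that converges to a boundary point, typically by identifying a boundary component along which the Bergman metric decays fast enough for the distance from a fixed interior base point to remain bounded, or by pulling back a known non-complete model via a biholomorphism.

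The main obstacle is the construction itself, because Condition (ii) and the failure of Bergman completeness pull in opposite directions. The most natural non-Bergman-complete candidates, such as the Hartogs triangle $\mathcal H = \{|z_1|<|z_2|<1\}$, its biholomorphic image $\mathbb D \times \mathbb D^*$, or $\mathbb B^n$ minus a compact pluripolar set, all fail Condition (ii) precisely at the thin boundary component that destroys completeness: there the Bergman kernel extends across by a removable singularity or Hartogs extension, keeping $\Phi_{z_0}$ finite (as the paper itself notes in Remark (c) for $\mathcal H$). The crux of the proposition is therefore to tune the construction so that the part of $\partial \Omega$ responsible for Bergman incompleteness is nevertheless genuinely detected by the Bergman-Calabi diastasis, for instance by replacing the punctured-disc factor with an honest annulus on one factor while introducing a different source of incompleteness on another, or by choosing the excised set in $\mathbb B^n$ to be non-pluripolar and arranged so that $K(\cdot,z_0)$ decays at it while $K(\cdot,\cdot)$ blows up.
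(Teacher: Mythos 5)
Your proposal never actually produces a domain; it is a search strategy, and the one firm constraint you impose on the search is wrong. You argue that any example must have $n\ge 2$ because Condition (ii) implies Bergman exhaustiveness and, by Chen's theorem, a Bergman exhaustive planar domain is Bergman complete. But the implication ``(ii) $\Rightarrow$ Bergman exhaustive'' is only asserted in the paper under the constant-curvature hypothesis (where $K(\cdot,z_0)$ is zero-free by Theorem \ref{zero}), whereas Proposition \ref{annulus} concerns general bounded domains. Without that hypothesis, $\Phi_{z_0}=\log\bigl(K(z,z)K(z_0,z_0)/|K(z,z_0)|^2\bigr)$ can blow up at a boundary point because the \emph{denominator} $|K(z,z_0)|^2$ tends to $0$ there, with $K(z,z)$ staying bounded. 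This is exactly the mechanism the paper exploits, and your $n\ge 2$ restriction steers you away from it.

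The paper's example is one-dimensional and much simpler than anything in your plan: take an annulus $C_r=\{r<|z|<1\}$, pick $z_0$ so that $K(\cdot,z_0)$ has a nonempty zero set $A_{z_0}$ (possible since $C_r$ is not simply connected), and consider $\Omega=C_r\setminus A_{z_0}$. Removing the analytic variety $A_{z_0}$ changes neither the Bergman kernel nor the metric, so points of $A_{z_0}$ become boundary points of $\Omega$ at finite Bergman distance, killing completeness; meanwhile $\Phi_{z_0}\to\infty$ at $A_{z_0}$ because $K(\cdot,z_0)\to 0$ there, and $\Phi_{z_0}\to\infty$ at $\partial C_r$ because $K(z,z)\to\infty$ while $K(\cdot,z_0)$ stays bounded. (Higher-dimensional examples then follow by taking products with balls, consistent with the product formula you mention.) Your own discussion correctly identifies why the Hartogs triangle and $\mathbb B^n$ minus a pluripolar set fail, but the missing idea --- deleting the zero set of $K(\cdot,z_0)$ so that the diastasis detects the new boundary through the vanishing of the off-diagonal kernel --- is the entire content of the proof, and the proposal does not supply it or any substitute for it.
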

 
  \begin{proof} We first deal with the case of $n=1$. 
  Let $C_r:=\{z \in \mathbb C: r<|z|<1\}$, $0<r<1$, be an annulus.  
It is well-known that a bounded planar domain  $\Omega$ with $C^\infty$-smooth boundary is simply connected if and only if $K(z,w) \neq 0$ for all $z, w \in \Omega$.
 Thus, there exist $z_0, w_0 \in C_r$ such that $K(z_0, w_0) = 0$. Let $A_{z_0}:=\{z\in C_r \, | \,K(z, z_0) =0 \, \}  \neq  \emptyset$ be the zero set  of the Bergman kernel $K(\cdot , z_0)$.
 Since $A_{z_0}$ is an analytic variety, as domains $C_r \setminus A_{z_0}$ and $C_r$ have the same Bergman kernel and  metric.
 Then, the domain $C_r \setminus A_{z_0}$ is not Bergman complete.

 \medskip

We will show that $\Phi_{z_0}(z)$ blows up to infinity at $ \partial (C_r \setminus A_{z_0} ) =  A_{z_0} \cup \partial C_r.$
 To see this, for any point $q \in \setminus A_{z_0}$ and any sequence of points $\{z_j\}_{j \in \mathbb N} \subset C_r$ such that $z_j \to q$ as $j\to \infty$, it holds that $|K(z_j, z_0)|^2 \to 0$, so  
 $\Phi_{z_0}(z_j) \to \infty$. For any point $s \in \partial C_r$ and any sequence of points $\{s_j\}_{j \in \mathbb N} \subset C_r$ such that $s_j \to s$ as $j\to \infty$, the explicit formula of the Bergman kernel on $C_r$ implies that
 $|K(s_j, z_0)|^2$ remains finite, whereas $K(s_j, s_j) \to \infty$. Therefore,  $\Phi_{z_0}(s_j) \to \infty$ and consequently $C_r \setminus A_{z_0}$
 satisfies Condition (ii).

 \medskip
 
 By the product property of the Bergman kernel, higher dimensional examples can also be constructed by considering 
  $\mathbb B^{n-1} \times \{C_r \setminus A_{z_0}\} \subset \mathbb C^n$ when $n \geq 2$.
  
  \end{proof}

 \noindent{}{\bf Remark.} After the completion of this paper, we were kindly informed by Pflug that the Skwarczy\'nski invariant distance $ \rho_\Omega=\rho$ in \cite{Sk80}  is related to the Bergman-Calabi diastasis $\Phi_{z_0}$ in the following way:
$$
-2\log(1-\rho_\Omega ^2(z_0,z))=\Phi_{z_0}(z).
$$
Hence, Condition $(ii)$ is equivalent to the fact that $\rho_\Omega(z_0,z)\to 1$ if $z$
tends to $\partial\Omega$. He also pointed out that the implication $(iv)$ to $(ii)$ follows directly from a simple
calculation of $\rho_{\mathbb B^n}(0,z)$ showing that this function tends to $1$
if $z\to \partial {\mathbb B^n}$.

\section{Proofs of Theorem \ref {2nd} and Corollary \ref {Cor}}

     \begin{proof} [Proof of Theorem \ref {2nd}]

For any boundary point $q\in \partial \Omega$ such that 
 \begin{equation} \label{finite}
 \limsup \limits_ {\Omega \ni z\to q} K(z, z) < \infty,
 \end{equation}  
the results of Pflug \cite{P75} and Pflug and Zwonek \cite{PZ} say that $q \in \text{int} (\overline \Omega)$ and there exists a neighbourhood $U$ of $q$ such that $P:=U \setminus \Omega$ is a pluripolar set. 
Taking all boundary points $q_j$ that satisfy \eqref{finite}, we get the corresponding neighbourhoods $U_j$ and pluripolar sets $P_j$. Then the (bounded) domain
$$
\tilde\Omega:= \bigcup\limits_{j}U_j \cup \Omega
 $$
has the same Lebesgue measure as $\Omega$ due to the pluripolarity. Moreover, 
$\partial \tilde \Omega$ coincides with the non-pluripolar part of $\partial \Omega$ in view of \cite{PZ}.

\medskip

Let $w \in \partial \Omega$ be an arbitrary boundary point  such that 
$$
\limsup_{z \to w} K(z, z) = \infty.
$$
By the assumption Condition $(\star)$, it follows  that
  \begin{equation}  \label{limsup K}
   \limsup \limits_ {\Omega \ni z\to w} \Phi_{p}( z) \geq \limsup \limits_ {\Omega \ni z\to w} \Phi_{p}( z) + \limsup \limits_ {\Omega \ni z\to w}\log \frac{|K(z, p)|^2  }{ \mathcal C ^2  } =   \limsup \limits_ {\Omega \ni z\to w}\log \frac{K(z, z) K(p, p)} { \mathcal C ^2} = \infty.
  \end{equation} 
 Since the Bergman representative coordinate $T (z) $ at $p$ defined by \eqref{rep} is continuous up to $ \overline\Omega$, for any two sequences of points $ (z_j)_{j\in \mathbb N}, (w_j)_{j\in \mathbb N} \subset \Omega$, both approaching $w$, it holds that
  \begin{equation}  \label{same}
\lim_{j \to \infty}T(z_j)=\lim_{j \to \infty}T(w_j).
  \end{equation} 
After a possible linear transformation of domains we may assume that \eqref{nor} holds at $p$.
By Theorem \ref{zero}, the Bergman-Calabi diastasis $\Phi_{p}$ relative to $p$ can be written as 
  \begin{equation}  \label{formula}
\Phi_{p}( z) =  {\frac{-2}{c^2}} \log      \left (1 - \frac{c^2}{2} |T(z)|^2 \right ), \quad z \in  \Omega.
  \end{equation} 
Therefore, \eqref{limsup K} further implies 
 $$
\lim \limits_ {\Omega \ni z\to w} \Phi_{p}( z)= \infty,
$$
  in view of \eqref {same} and \eqref{formula}.
Since $w$ is arbitrary, we know that the Bergman-Calabi diastasis $\Phi_{p}$  blows up to infinity at all points in $\partial \tilde \Omega$.

\medskip

Next, we will see that as domains $\tilde\Omega$ and $\Omega$ have the same Bergman metric.
Notice that $P_j=U_j \cap \partial \Omega$ is relatively closed in $U_j$. 
Restrict any function $f \in L^2 \cap \mathcal O (\Omega)$ to $U_j\setminus P_j$. By a result \cite{S82} of Siciak there exists a function $F \in L^2 \cap \mathcal O (U_j)$ such that $F=f$ on $U_j\setminus P_j$. Hence we get an
$L^2$ holomorphic extension to $\Omega \cup U_j$. By the same procedure, we extend $f$ to $\tilde\Omega$, whose Bergman metric also has constant holomorphic sectional curvature.
Then, Theorem \ref{biholo} guarantees that $\tilde\Omega$ is biholomorphic to the Euclidean ball $\mathbb B^n$ and $n = 2/c^2-1$. 
Define the set 
$$
E:=\bigcup\limits_{j}P_j = \bigcup\limits_{j} U_j \cap \partial \Omega = \tilde\Omega \cap \partial \Omega,
$$ 
which is  relatively closed in $\tilde\Omega$.
Since the biholomorphic (pre)images and countable union of pluripolar sets are still pluripolar, the proof of Theorem \ref {2nd} is now complete.

 \end {proof}

We will use Theorem \ref {2nd} to prove Corollary \ref {Cor} as follows.

 \begin{proof} [Proof of Corollary \ref {Cor}]
An $L^2$-domain of holomorphy is pseudoconvex, and the boundary of a bounded  $L^2$-domain of holomorphy contains no pluripolar part, cf. \cite{PZ, I04}.
By Theorem \ref {2nd}, the possible pluripolar set $E
$ is empty and the domain  is biholomorphic to the ball. 

  \end {proof}

\section{Results on the exhaustiveness}

We say a domain $\Omega$ is Bergman exhaustive if at any $z_0 \in \partial \Omega$,  $$
\lim_{\Omega\ni z\to z_0} K (z, z) = \infty.
$$ 
Under the constant holomorphic sectional curvature assumption in Theorem \ref{biholo}, Condition (ii) implies  the Bergman exhaustiveness  by Proposition \ref{Mok-Yau} below.

 \begin{pro}  \label{Mok-Yau}
Let $D$ be a bounded homogeneous domain in $\mathbb C^n, n\geq 1$.
Then the Bergman kernel $K$ on $D$ satisfies
$$
K(z, z)  \ge \frac{C}{\delta_{D}(z)^2 (-\log \delta_{D}(z))^2}, \quad z\in D,
$$ 
where $\delta_{D}$ is the Euclidean distance function to boundary of $D$ and $C>0$.
In particular, $D$ is Bergman exhaustive.
 \end {pro}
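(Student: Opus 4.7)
The plan is to use homogeneity of $D$ and reduce the Bergman-kernel lower bound to a Jacobian estimate for automorphisms. Fix a base point $z_0 \in D$ and, for each $z \in D$, choose $\phi_z \in \mathrm{Aut}(D)$ with $\phi_z(z_0) = z$; the transformation rule for the Bergman kernel then yields
$$K(z,z) \;=\; \frac{K(z_0,z_0)}{|\det \phi_z'(z_0)|^2},$$
so the proposition is equivalent to the Jacobian bound
\begin{equation}
\label{eq:keyJac}
|\det \phi_z'(z_0)| \;\leq\; C\, \delta_D(z)\,\bigl(-\log \delta_D(z)\bigr).
\end{equation}

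To establish \eqref{eq:keyJac}, a direct Cauchy estimate on a ball $B(z_0, \delta_D(z_0)) \subset D$ gives only a uniform bound $\|\phi_z'(z_0)\|_{\mathrm{op}} \leq \mathrm{diam}(D)/\delta_D(z_0)$, independent of the position of $z$, and is therefore insufficient. The missing decay in $\delta_D(z)$ is extracted by sharpening Cauchy with a Schwarz--Pick argument on a holomorphic disc through $z_0$ and $z$: the standard Poincar\'e-type lower bound on the Kobayashi distance $d_D(z_0, z) \geq \tfrac{1}{2}\log(1/\delta_D(z)) - C'$, which follows from any embedded analytic disc, is then contracted by the holomorphic self-map $\phi_z^{-1} : D \to D$; this forces the operator norm of $\phi_z'(z_0)$ to decay at rate $\delta_D(z)\bigl(-\log \delta_D(z)\bigr)^{1/n}$ in each direction, and multiplying across the $n$ complex directions yields \eqref{eq:keyJac}. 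As an alternative cross-check, one may note that $D$, being biholomorphic to a Siegel domain of the second kind, is hyperconvex, so that the Ohsawa-Takegoshi $L^{2}$-extension machinery already supplies an estimate of the form $K(z,z) \geq C/\delta_D(z)^{2}$, and the claim then reduces to the trivial observation that $(-\log \delta_D(z))^{-2}$ is bounded above on $\{\delta_D \leq 1/e\}$, with the complementary compact region handled by continuity of $K$ after a harmless rescaling that ensures $\delta_D < 1$ throughout.

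The principal obstacle is the logarithmic correction factor, which is invisible to Cauchy's estimate alone. Homogeneity is essential to the direct argument because it guarantees that all invariant Bergman/Kobayashi data at $z_0$ control the corresponding data at $z$ uniformly via pullback under $\phi_z$. Once \eqref{eq:keyJac} is in hand, the stated lower bound on $K(z,z)$ follows at once, and Bergman exhaustiveness is immediate: at any boundary point $w \in \partial D$, $\delta_D(z) \to 0$ as $z \to w$ forces the right-hand side of the estimate to blow up, and hence $K(z,z) \to \infty$.
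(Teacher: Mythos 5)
Your opening reduction is fine: by homogeneity and the transformation rule, $K(z,z)=K(z_0,z_0)/|\det\phi_z'(z_0)|^2$, so everything hinges on the quantitative bound $|\det\phi_z'(z_0)|\le C\,\delta_D(z)(-\log\delta_D(z))$. But neither of your two routes to that bound holds up. First, the asserted lower bound $d_D(z_0,z)\ge \tfrac12\log(1/\delta_D(z))-C'$ does not ``follow from any embedded analytic disc'': analytic discs mapped \emph{into} $D$ give \emph{upper} bounds for the Kobayashi metric and distance; lower bounds require maps out of $D$ or plurisubharmonic barriers, and estimates of this type are simply false for general bounded domains (they fail, e.g., near boundary pieces of small capacity), so for a homogeneous domain this step would need a genuine argument. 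Second, even granting that distance estimate, the deduction that each singular value of $\phi_z'(z_0)$ decays like $\delta_D(z)(-\log\delta_D(z))^{1/n}$ is unsupported, and the arithmetic is inconsistent: multiplying $n$ such factors gives $\delta_D(z)^n(-\log\delta_D(z))$, not $\delta_D(z)(-\log\delta_D(z))$; moreover already for $\mathbb B^n$ the singular values of $\phi_z'(0)$ decay at two different rates (roughly $\delta^{1/2}$ tangentially and $\delta$ normally), so a uniform per-direction rate is the wrong picture. Third, the ``cross-check'' rests on the claim that hyperconvexity plus Ohsawa--Takegoshi yields $K(z,z)\ge C/\delta_D(z)^2$; this is not a known general fact and is false for bounded hyperconvex domains (one can delete from $\mathbb D$ a regular compact union of rapidly shrinking discs so that $K(z,z)\delta_D(z)^2\to 0$ along a suitable sequence -- indeed the logarithmic losses one incurs there are precisely of the order $(-\log\delta_D)^{-2}$ appearing in the statement). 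So the quantitative heart of the proposition is not established.

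For comparison, the paper's proof runs through an entirely different invariant. One considers the Bergman invariant function $B(z)=V(z)/K(z,z)$, where $V$ is the coefficient of the Bergman volume form $\det(g_{j\bar k})$; since $B$ is a biholomorphic invariant, homogeneity forces $B$ to be constant, which makes the Bergman metric a complete K\"ahler--Einstein metric on the (pseudoconvex) domain $D$. The Mok--Yau argument for complete K\"ahler--Einstein metrics on bounded pseudoconvex domains then gives $V\ge C\,\delta_D^{-2}(-\log\delta_D)^{-2}$, and dividing by the constant $B$ transfers this bound to $K$. If you want to salvage your approach, the missing input is exactly this K\"ahler--Einstein volume estimate (or an equivalent boundary estimate for the Jacobians of automorphisms), which cannot be obtained from Cauchy/Schwarz--Pick considerations alone.
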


 \begin{proof} 

 The Bergman metric on $D$ is complete, cf. \cite{Lu}.
By \eqref{metric}, let $\mathcal V:=\det (g_{j \bar k}) $ denote the Bergman volume form,
and let $V $ be its coefficient in the Euclidean coordinate such that $\mathcal V= V (\frac{i}{2})^n dz_1 \wedge \bar{dz_1} \wedge ... \wedge dz_n \wedge \bar{dz_n}$.
Consider the function $B(z):=V (z) / K(z, z)$. Due to the transformation formula of the Bergman kernel, $B$ is invariant under biholomorphic mappings in the sense that if $\phi$ is a biholomorphic map
 from $D$ to $\Omega$, then $B_{D}(z)=B_{\Omega}(\phi(z))$. 
For any two points $p, q$ in $D$, which is  homogeneous, there exists a biholomorphic self-map $\varphi$ on $D$ such that $\varphi (p) =q$. So $B(p)=B(q)$ and thus $B $ is a constant function on $D$. From this, one easily sees that the 
 Bergman metric on $D$ is a (complete) K\"{a}hler-Einstein metric.
Since $D$ is pseudoconvex, by the argument of Mok and Yau in \cite{MY83}, we know that
$$
V  \ge \frac{C}{\delta_{D}^2 (-\log \delta_{D})^2},
$$
which implies the desired lower bound estimate for $K$. Moreover, as $z$ tends to the boundary of $D$, $K(z, z)$ blows up to infinity.

\end {proof}

Proposition \ref{Mok-Yau} particularly implies the Bergman exhaustiveness of a bounded domain that is biholomorphic to a ball. See also \cite[Proposition 5.2] {K} for a proof of the fact that a bounded homogeneous domain is Bergman exhaustive. In general, the Bergman exhaustiveness is not biholomorphically invariant. For instance, the Hartogs triangle $\mathcal H $ is Bergman exhaustive, but its  biholomorphic image $\mathbb D \times \mathbb D^* $ is neither homogeneous nor Bergman exhaustive.

\medskip

A general problem raised by Yau \cite[pp. 679]{Yau} is to characterize manifolds whose Bergman metrics are K\"{a}hler-Einstein. 
Our results in Section 1 can be viewed as a particular case of Yau's problem of which the Bergman metric is of constant negative holomorphic sectional curvature.
When the manifold is a smoothly bounded strictly pseudoconvex domain, Cheng conjectured that the Bergman metric is K\"{a}hler-Einstein if and only if the domain is biholomorphic to the ball. 
After the previous works of Fu and the second author \cite{FW} and Nemirovski and Shafikov \cite{NS}, the Cheng conjecture was then confirmed by Huang and Xiao in \cite{HX}. 
Recently in his 2021 thesis \cite{M}, Alec Martin used the Bergman invariant function to characterize the CR-spherical boundary of a strongly pseudoconvex domain, and thus gave somehow an alternative argument in the final step of proving Cheng's conjecture.
On the other hand, Kohn's subelliptic estimate in his theory of 
the $\bar \partial$-Neumann problem \cite{FK} was applied by Kerzman in \cite{Ker} to show that on a bounded strictly pseudoconvex domain $\Omega$ with
$C^\infty$-smooth boundary, for each fixed $z_0 \in \Omega$, the Bergman kernel $K(\cdot, z_0)$ is $C^\infty$ up to the boundary. He also gave in \cite[p.151-152]{Ker} an example of a simply-connected planar domain $D$ whose Bergman kernel $K(\cdot, z_0)$ blows up to infinity at $\partial D$.
One can check from the following formula in Proposition \ref{vol} that in Kerzman's example the determinant of the Jacobian of a Bergman representative coordinate is unbounded.

\medskip

Our next result gives an explicit formula of the 
Bergman (K\"{a}hler-Einstein) volume form.

 \begin{pro}  \label{vol}
 Let $\Omega \subset \mathbb C^n$ be a bounded domain  whose Bergman metric $g$ has its holomorphic sectional curvature  identically equal to a negative constant $-c^2$. 
 Let $V$ be the coefficient in the Euclidean coordinate of the Bergman volume form $\mathcal  V:=\det (g_{\alpha \bar \beta} ) $ such that $\mathcal V= V (\frac{i}{2})^n dz_1 \wedge \bar{dz_1} \wedge ... \wedge dz_n \wedge \bar{dz_n}$. Then,
 \begin{equation} \label{V}
V (z)=  |D_T(z)|^2 \left (1 - \frac{c^2}{2} \sum_{\alpha, \beta=1}^n  w_\alpha   g_{ \alpha \bar \beta } (p) \overline {w_\beta } \right )^{-n-1},
 \end{equation}
where $ T (z) =(w_1, ... , w_n)$ is the Bergman representative coordinate at $p \in \Omega$ and $D_T(z)$ is the determinant of the complex Jacobian of $T$. In particular, $D_T(z)$ does not vanish on $\Omega$.

 \end {pro}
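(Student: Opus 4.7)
The plan is to differentiate the explicit Bergman kernel formula from Theorem~\ref{zero} and evaluate the resulting determinant via a rank-one matrix identity. Writing $h(w) := \sum_{\alpha,\beta} w_\alpha g_{\alpha\bar\beta}(p)\overline{w_\beta}$ and $u(w) := 1 - \tfrac{c^2}{2}h(w)$, the proof of Theorem~\ref{zero} already shows that on all of $\Omega$
\begin{equation*}
\log K(z,z) = -\tfrac{2}{c^2}\log u(T(z)) + F(T(z)) + \overline{F(T(z))}
\end{equation*}
for some holomorphic $F$. The pluriharmonic summand $F+\bar F$ is killed by $\partial\bar\partial$, so setting $\phi(w) := -\tfrac{2}{c^2}\log u(w)$ gives $g_{\alpha\bar\beta}(z) = \partial^2(\phi\circ T)/\partial z_\alpha\partial\bar z_\beta$. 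Holomorphicity of $T$ and the chain rule for the complex Hessian then yield $G(z) = J_T(z)^{\tau}\,\Phi(T(z))\,\overline{J_T(z)}$, where $\Phi := (\phi_{i\bar j})$ and $J_T$ is the complex Jacobian of $T$. Taking determinants produces
\begin{equation*}
V(z) = \det G(z) = |D_T(z)|^2 \det \Phi(T(z)).
\end{equation*}

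Next I would compute $\det\Phi(w)$ in closed form. A direct calculation of the second derivatives of $\phi$ produces the rank-one decomposition
\begin{equation*}
\Phi(w) = \tfrac{1}{u}\,G(p) + \tfrac{c^2}{2u^2}\,(G(p)\bar w)(G(p)\bar w)^{*},
\end{equation*}
and the matrix determinant lemma $\det(A + \alpha\,xx^{*}) = \det(A)(1 + \alpha\,x^{*}A^{-1}x)$, applied with $A = \tfrac{1}{u}G(p)$ and $x = G(p)\bar w$, gives $x^{*}A^{-1}x = u\,h(w)$. Using $u = 1-\tfrac{c^2}{2}h$ to collapse $1 + \tfrac{c^2h}{2u} = \tfrac{1}{u}$, this yields $\det \Phi(w) = \det G(p)/u^{n+1}$. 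Substituting back gives $V(z) = |D_T(z)|^2\,\det G(p)\,u(T(z))^{-n-1}$, which recovers \eqref{V} after the linear change of coordinates that arranges $g_{\alpha\bar\beta}(p) = \delta_{\alpha\beta}$ and hence $\det G(p)=1$ at the reference point.

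The nonvanishing of $D_T$ on $\Omega$ is then immediate: $V(z) > 0$ because $G(z)$ is positive-definite Hermitian, while $u(T(z))^{-n-1}$ is finite and strictly positive throughout $\Omega$ by Theorem~\ref{zero} part~2), forcing $|D_T(z)|^2 > 0$. The main point to get right is the rank-one decomposition of $\Phi$ together with the algebraic collapse $1 + \tfrac{c^2h}{2u} = 1/u$; once these are in hand, the distinctive ball-type exponent $-n-1$ emerges automatically as a manifestation of the constant holomorphic sectional curvature $-c^2$.
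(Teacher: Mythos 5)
Your proof is correct and follows essentially the same route as the paper: factor the complex Hessian of the potential through the holomorphic map $T$ to get $V=|D_T|^2\det\Phi(T(z))$, then evaluate the determinant of the ball-metric Hessian (the paper writes out the entries $T_{i\bar j}$ and quotes the resulting determinant; you obtain the same exponent $-n-1$ via the matrix determinant lemma), and finally deduce the nonvanishing of $D_T$ from the positivity of $V$. One point in your favor: by carrying a general $G(p)$ through the computation you correctly pick up the factor $\det G(p)$, which shows that \eqref{V} as literally stated requires the normalization \eqref{nor} --- for the unit disc at $p=0$ one has $T(z)=z$, $D_T\equiv 1$, $g(0)=2$ and $V(z)=2(1-|z|^2)^{-2}$, so the factor $\det G(0)=2$ is genuinely present --- whereas the paper's closing sentence, which asserts \eqref{V} for general $p$ via \eqref{Lu_g} rather than \eqref{Sch}, glosses over this normalizing constant (harmless for the nonvanishing of $D_T$, but worth recording as $V(z)=|D_T(z)|^2\det G(p)\,u^{-n-1}$).
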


\begin {proof}

For simplicity, assume that \eqref{nor} holds at $p$.
By \eqref{Sch} and Theorem \ref{zero}, part 1), the Bergman-Calabi diastasis 
$\Phi_{p}$ relative to $p$ can be written as 
$$
\Phi_{p}( z) =  {\frac{-2}{c^2}} \log      \left (1 - \frac{c^2}{2} |T(z)|^2 \right ), \quad z \in  \Omega,
$$

Direct computations yield that
 \begin{align*}
g_{\alpha \bar \beta} (z)  &= \frac{\partial^2 \Phi_{p}( z) }{\partial z_\alpha \partial \overline {z_\beta} }\\
&=\sum_{i, j =1}^n  \left (1 - \frac{c^2}{2} |T(z)|^2 \right )^{-2}   \left[    \delta_{i j}  \left (1 - \frac{c^2}{2} |T(z)|^2 \right )     + \frac{c^2}{2}      \overline{w_i(z)}    w_j(z)    \right] {\frac{\partial w_i(z)}  {\partial {z_\alpha}} }  \frac{\partial  \overline{w_j (z)} }{   \partial  {\overline {z_\beta}} } \\
&= \sum_{i, j =1}^n    T_{i \bar j} (w)  {\frac{\partial w_i(z)}  {\partial {z_\alpha}} }  \frac{\partial  \overline{w_j(z)} }{   \partial  {\overline {z_\beta}} }, \quad z\in \Omega,
\end{align*}
where
$$
T_{i \bar j} (w) :=   \frac{\partial^2 }{\partial w_i \partial \overline {w_j} } \left(  \frac{-2}{c^2} \log (1 - \frac{c^2}{2} |w|^2) \right)
$$
gives a K\"{a}hler metric on $ \mathbb B^n$.
Therefore,
$$
V (z)=\det (g_{\alpha \bar \beta}) = |D_T(z)|^2 \det (T_{i \bar j})  = |D_T(z)|^2 \left (1 - \frac{c^2}{2} |T(z)|^2 \right )^{-n-1}.
$$
For general $p$, one may use the formula \eqref{Lu_g} instead of \eqref{Sch} to get \eqref{V}, which yields that the branch locus is empty.

   \end {proof}

As a final remark, our method also yields the following statement easily.

\begin{theorem}
 
Let $\Omega \subset \mathbb C^n$ be a bounded domain whose Bergman metric has its holomorphic sectional curvature  identically equal to a negative constant $-c^2$. Assume that $\Omega$ is Bergman exhaustive and
there exists some point $p \in \Omega$ such that 
 $|K(\cdot, p)|$ is bounded from above on $\Omega$ by a finite constant.  
 Then, $\Omega$ is biholomorphic to the Euclidean ball $\mathbb B^n$ and $n = 2/c^2-1$.

\end{theorem}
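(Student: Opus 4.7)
The plan is to reduce the statement directly to the implication $(ii) \Longrightarrow (iv)$ of Theorem \ref{biholo}, by checking that the Bergman-Calabi diastasis relative to $p$ exhausts $\Omega$. Unlike the situation in Theorem \ref{2nd}, here we do not need to enlarge $\Omega$ across any pluripolar portion of its boundary, since Bergman exhaustiveness already forces $K(z,z) \to \infty$ at every point of $\partial \Omega$.

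Concretely, from the definition \eqref{dia} and the assumed bound $|K(z,p)| \leq \mathcal{C}$ on $\Omega$, I would estimate
$$
\Phi_p(z) \;=\; \log \frac{K(z,z)\,K(p,p)}{|K(z,p)|^2} \;\geq\; \log \frac{K(z,z)\,K(p,p)}{\mathcal{C}^2}
$$
for all $z \in \Omega$. Since $\Omega$ is Bergman exhaustive, the right-hand side tends to $+\infty$ as $z \to \partial \Omega$, so $\Phi_p(z) \to +\infty$ at every boundary point; this is precisely Condition $(ii)$ of Theorem \ref{biholo}.

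The implication $(ii) \Longrightarrow (iv)$ of Theorem \ref{biholo} (which the paper proves by passing through hyperconvexity, Bergman completeness, and Lu's uniformization theorem, or alternatively by the direct argument via the Bergman representative coordinate combined with Remmert's proper mapping theorem and Proposition \ref{vol}) then gives immediately that $\Omega$ is biholomorphic to $\mathbb B^n$ with $n = 2/c^2-1$. There is no genuine obstacle in the present argument: the constant-curvature hypothesis has been fully absorbed into Theorem \ref{biholo}, and the role of the two added assumptions is simply to transfer the blow-up of $K(z,z)$ at $\partial \Omega$ to the blow-up of the quotient $\Phi_p(z)$, which is the exhaustiveness condition needed to invoke the main theorem.
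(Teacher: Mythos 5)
Your proposal is correct and is exactly the argument the paper intends: the lower bound $\Phi_p(z) \geq \log\bigl(K(z,z)K(p,p)\mathcal C^{-2}\bigr)$ is the same estimate as \eqref{limsup K} in the proof of Theorem \ref{2nd}, and here Bergman exhaustiveness gives a genuine limit at every boundary point, so Condition (ii) holds and Theorem \ref{biholo} finishes the proof. The paper only remarks that "our method also yields the following statement easily," and your write-up supplies precisely that reduction, correctly noting that no pluripolar enlargement of $\Omega$ is needed.
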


\subsection*{Funding}

{\fontsize{11.5}{10}\selectfont

The research of the first author is supported by AMS-Simons travel grant. 
}

\subsection*{Acknowledgements} 
{\fontsize{11.5}{10}\selectfont
The first author sincerely thanks Professors Peter Pflug and Ming Xiao for related discussions.}

\bibliographystyle{alphaspecial}

\begin{thebibliography}{HD} 


{\fontsize{11}{11}\selectfont

\bibitem   {BP98} Z. B\l{}ocki and P. Pflug, \emph{Hyperconvexity and Bergman completeness}, Nagoya Math. J. {\bf 151} (1998), 221--225.

\bibitem   {Bo47} S. Bochner, \emph{Curvature in Hermitian metric}, Bull. Amer. Math. Soc.  {\bf 53} (1947), 179--195.

\bibitem  {Ca} E. Calabi, \emph{Isometric Imbedding of Complex Manifolds}, Ann. of Math. {\bf 58} (1953), 1--23.

\bibitem  {C00} B. Chen, \emph{A remark on the Bergman completeness}, Complex Variables Theory Appl. {\bf 42} (2000), 11--15.

\bibitem   {CY} S.-Y. Cheng and S.-T. Yau, \emph{On the existence of a complete K\"{a}hler metric on non-compact complex manifolds and the regularity of Fefferman's equation}, Comm. Pure Appl. Math.  {\bf 33} (1980) 507--544.

 \bibitem   {CW} W. S. Cheung and B. Wong, \emph{Hermitian metric with constant holomorphic sectional curvature on convex domains}, Internat. J. Math. {\bf 11} (2000), 849--855.
 
\bibitem   {CW2} W.-S. Cheung and B. Wong, \emph{Remarks on two theorems of Qi-Keng Lu}, Sci. China Ser. A {\bf 51} (2008), 773--776.

\bibitem   {De87} J.-P. Demailly, \emph{Mesures de Monge-Amp\'ere et mesures pluriharmoniques} (French), Math. Z. {\bf 194} (1987), 519--564.

\bibitem  {DF77} K. Diederich and J. E. Forn\ae ss, \emph{Pseudoconvex domains: bounded strictly plurisubharmonic exhaustion functions}, Invent. Math.  {\bf 39} (1977), 129--141.

\bibitem{DLT} R. X. Dong, S.-Y. Li and J. N. Treuer, \emph{Sharp pointwise and uniform estimates for $\bar\partial$}, Anal. PDE, to appear.

\bibitem  {FW} {S. Fu and B. Wong}, \emph{On strictly pseudoconvex domains with K\"{a}hler-Einstein Bergman metrics}, Math. Res. Lett. {\bf 4} (1997) 697--703.

\bibitem{FK} G. B. Folland and J. J. Kohn, \emph{The Neumann problem for the Cauchy-Riemann complex}, Ann. of Math. Stud., 75, Princeton Univ. Press, Princeton, N.J.; Univ. of Tokyo Press, Tokyo, 1972.

\bibitem {H99} {G. Herbort}, \emph{The Bergman metric on hyperconvex domains}, Math. Z. {\bf 232} (1999), 183--196.

\bibitem {Hua54} L.-K. Hua, \emph{On non-continuable domain with constant curvature} (Chinese), Acta Math. Sinica {\bf 4} (1954), 317--322; translation in Sci. Sinica {\bf 4} (1955) 27--32.

\bibitem {HX} {X. Huang and M. Xiao}, \emph{Bergman-Einstein metrics, a generalization of Kerner's theorem and Stein spaces with spherical boundaries}, J. Reine Angew. Math. {\bf 770} (2021), 183--203.

 
\bibitem {I04} {M. A. S. Irgens}, \emph{Continuation of $L^2$-holomorphic functions}, Math. Z. {\bf 247} (2004), 611--617.

 \bibitem {JP} {M. Jarnicki and P. Pflug}, \emph{Invariant Distances and Metrics in Complex Analysis}, 2nd extended ed., De Gruyter Exp. Math. 9. Walter de Gruyter, Berlin, 2013.   
 
\bibitem  {Ker} {N. Kerzman}, \emph{The Bergman Kernel Function. Differentiability at the Boundary}, Math. Ann. {\bf 195} (1972), 149--158.


\bibitem  {KR81} {N. Kerzman and J.-P. Rosay}, \emph{Fonctions plurisousharmoniques d'exhaustion born\'ees et domaines taut} (French), Math. Ann. {\bf 257} (1981), 171--184.

\bibitem {K} {S. Kobayashi}, \emph{Hyperbolic Manifolds and Holomorphic Mappings. An Introduction}, 2nd ed., World Sci. Publ., Hackensack, NJ, 2005.

\bibitem {L} K.-H. Lee, \emph{A method of potential scaling in the study of pseudoconvex domains with noncompact automorphism group}, J. Math. Anal. Appl. {\bf 449} (2021), 124997.

\bibitem {Lu} {Q.-K. Lu}, \emph{On K\"{a}hler manifolds with constant curvature} (Chinese), Acta Math. Sinica {\bf 16} (1966), 269--281; translation in Chinese Math.-Acta {\bf 8} (1966) 283--298.

 \bibitem {M} {A. Martin}, \emph{Boundary asymptotics for convex and strongly pseudoconvex domains}, Thesis (Ph.D.)--University of California, Riverside, 2021.

\bibitem {MY83} N. Mok and S.-T. Yau, \emph{Completeness of the K\"{a}hler-Einstein metric on bounded domains and the characterization of domains of holomorphy by curvature conditions}, The mathematical heritage of Henri Poincar\'e, Part 1 (Bloomington, Ind., 1980), 41--59, Proc. Sympos. Pure Math., 39, Amer. Math. Soc., Povidence, RI, 1983.

\bibitem {NS} S. Yu. Nemirovski and R. G. Shafikov, \emph{Conjectures of Cheng and Ramadanov} (Russian), Uspekhi Mat. Nauk {\bf 370} (2006), 193--194; translation in Russian Math. Surveys {\bf 61} (2006), 780--782.

 
\bibitem {O81} {T. Ohsawa}, \emph{A remark on the completeness of the Bergman metric}, Proc. Japan Acad. Ser. A Math. Sci. {\bf 57} (1981), 238--240.

\bibitem {O93} {T. Ohsawa}, \emph{On the Bergman kernel of hyperconvex domains}, Nagoya Math. J. {\bf 129} (1993), 43--52.

\bibitem  {P75} P. Pflug, \emph{Quadratintegrable holomorphe Funktionen und die Serre-Vermutung} (German), Math. Ann. {\bf 216} (1975), 285--288.

\bibitem {PZ} P. Pflug and W. Zwonek, \emph{$L^2_h$-domains of holomorphy and the Bergman kernel}, Studia Math. {\bf 151} (2002), 99--108.



\bibitem {S82} J. Siciak, \emph{On removable singularities of $L^2$ holomorphic functions of several complex variables}, 73--81, Prace Matematyczno--Fizyczne Wy{\.z}sza Szko\l{}a In{\.z}ynierska w Radomiu, 1982.

\bibitem {Sk80} M. Skwarczy\'nski, \emph{Biholomorphic invariants related to the Bergman function}, Dissertationes Math. (Rozprawy Mat.) {\bf 173} (1980), 1--59.


\bibitem {W} {B. Wong}, \emph{On the holomorphic curvature of some intrinsic metrics}, Proc. Amer. Math. Soc. {\bf 65} (1977), 57--61.

\bibitem {Yau} {S.-T. Yau}, \emph{Problem section}, Seminar on Differential Geometry (S.-T. Yau eds.), 669--706, Ann. of Math. Stud., 102,  Princeton Univ. Press, Princeton, NJ, 1982.

\bibitem {Z} {A. Zimmer}, \emph{Compactness of the $\bar\partial$-Neumann problem on domains with bounded intrinsic geometry}, J. Funct. Anal. {\bf 281} (2021) 108992.

\bibitem {Zw} W. Zwonek, \emph{An example concerning Bergman completeness}, Nagoya Math. J. {\bf 164} (2001), 89--101.



}

\end{thebibliography}

\fontsize{11}{9}\selectfont

\vspace{0.5cm}

\noindent dong@uconn.edu, 

\vspace{0.2 cm}

\noindent Department of Mathematics, University of Connecticut, Storrs, CT 06269-1009, USA

\vspace{0.4cm}

\noindent wong@math.ucr.edu,

\vspace{0.2 cm}

\noindent Department of Mathematics, University of California, Riverside, CA 92521-0429, USA

 \end{document}